\newcolumntype{L}[1]{>{\raggedright\let\newline\\\arraybackslash\hspace{0pt}}m{#1}}
\newcolumntype{C}[1]{>{\centering\let\newline\\\arraybackslash\hspace{0pt}}m{#1}}
\newcolumntype{R}[1]{>{\raggedleft\let\newline\\\arraybackslash\hspace{0pt}}m{#1}}
\newtheorem{theorem}{Theorem}
\newtheorem{corollary}{Corollary}[theorem]
\newtheorem{lemma}[theorem]{Lemma}
\newtheorem{proposition}[theorem]{Proposition}
\newtheorem{claim}{Claim}
\def\sss{\scriptscriptstyle}
\newcommand{\A}{\mathrm{A}}
\newcommand{\U}{\mathrm{U}}
\newcommand{\BH}{\mathrm{BH}}
\newcommand{\BO}{\mathrm{BO}}
\newcommand{\XX}{\mathbf{X}}
\newcommand{\YY}{\mathbf{Y}}
\newcommand{\e}{\mathrm{e}}
\long\def\/*#1*/{}
\newcommand\prob[1]{\mathbb{P}\left(#1\right)}  
\newcommand\expt[1]{\mathbb{E}\left(#1\right)}  
\newcommand\exptt[1]{\mathbb{E}_t\left(#1\right)}
\newcommand{\pto}{\ensuremath{\xrightarrow{\sss \mathbb{P}}}}  
\newcommand{\dto}{\ensuremath{ \xrightarrow{\sss d}}}  
\newcommand{\dif}{\ensuremath{\mathrm{d}}} 
\newcommand\disteq{\stackrel{\text{\scriptsize d}}{=\joinrel=}} 
\newcommand{\R}{\mathbb{R}}                 
\newcommand{\PP}{\mathbb{P}}  				
\newcommand{\floor}[1]{\ensuremath{\left\lfloor #1 \right\rfloor}}
\newcommand{\bld}[1]{\ensuremath{\boldsymbol{#1}}}
\newcommand{\var}[1]{\ensuremath{\mathrm{Var}\left(#1\right)}}
\newcommand{\oP}{\ensuremath{o_{\sss\PP}}}
\newcommand{\OP}{\ensuremath{O_{\sss\PP}}}
\newcommand{\E}{\mathbb{E}}
\newcommand{\CRG}{\text{\textsc{\textsc{crg}}} }
\newcommand{\RGG}{\text{\textsc{\textsc{rgg}}} }
\newcommand{\RSA}{\text{\textsc{\textsc{rsa}}} }
\begin{document}
\title{Corrected mean-field model for random sequential \\adsorption on random geometric graphs}

\author{Souvik Dhara}
\author{Johan S.H.~van Leeuwaarden}
\author{Debankur Mukherjee}
\affil{Department of Mathematics and Computer Science,\\
Eindhoven University of Technology, The Netherlands}

\renewcommand\Authands{, and }

\date{\today}
\maketitle

\numberwithin{equation}{section}
\numberwithin{theorem}{section}

\begin{abstract}
A notorious problem in mathematics and physics is to create a solvable model for random sequential adsorption of non-overlapping congruent spheres in the $d$-dimensional Euclidean space with $d\geq 2$. 
Spheres arrive sequentially at uniformly chosen locations in space and are accepted only when there is no overlap with previously deposited spheres. 
Due to spatial correlations, characterizing the fraction of accepted spheres remains largely intractable. 
We study this fraction by taking a novel approach that compares random sequential adsorption in Euclidean space to the nearest-neighbor blocking on a sequence of \emph{clustered random graphs}.
This random network model can be thought of as a \emph{corrected mean-field model} for the interaction graph between the attempted spheres.
Using functional limit theorems, we characterize the fraction of accepted spheres and its fluctuations. \\

\noindent
\emph{Keywords:} Random geometric graph; random sequential adsorption; jamming fraction; functional limit theorems; mean-field analysis.
\end{abstract}

\maketitle


\noindent

\section{Introduction}

Random sequential adsorption of congruent spheres in the $d$-dimensional Euclidean space has been a topic of great interest across the sciences, serving as basic models in  condensed matter and quantum physics~\cite{SBVK2014,LFCDJCZ2001,JCZRCL2000,SWM2010}, nanotechnology \cite{CKE2002,DGPLCM2002}, information theory and optimization problems~\cite{S2001,GMS2012,KT2013}.
Random sequential adsorption also arises naturally in experimental settings, ranging from the deposition of nano-scale particles on polymer surfaces, adsorption of proteins on solid surfaces to the creation of logic gates for quantum computing, and
many more applications in domains as diverse biology, ecology and sociology, see~\cite{CAP07,TS2010,T2013} for extensive surveys.
We refer with random sequential adsorption ($\RSA$) to the dynamic process defined as follows:   
At each time epoch, a point appears at a uniformly chosen location in space, and an attempt is made to place a sphere of radius $r$ with the chosen point as its center. 
The new sphere must either fit in the empty area without overlap with the spheres deposited earlier, or its  deposition attempt is discarded.
After $n$ deposition attempts, the quantity of interest is the proportion of accepted spheres, or equivalently, the volume covered by the accepted spheres. Fig.~\ref{fig:sfig1} illustrates an instance of this \RSA process in 2D.
\begin{figure}
\begin{subfigure}{.5\textwidth}
  \centering
  \includegraphics[scale=.45]{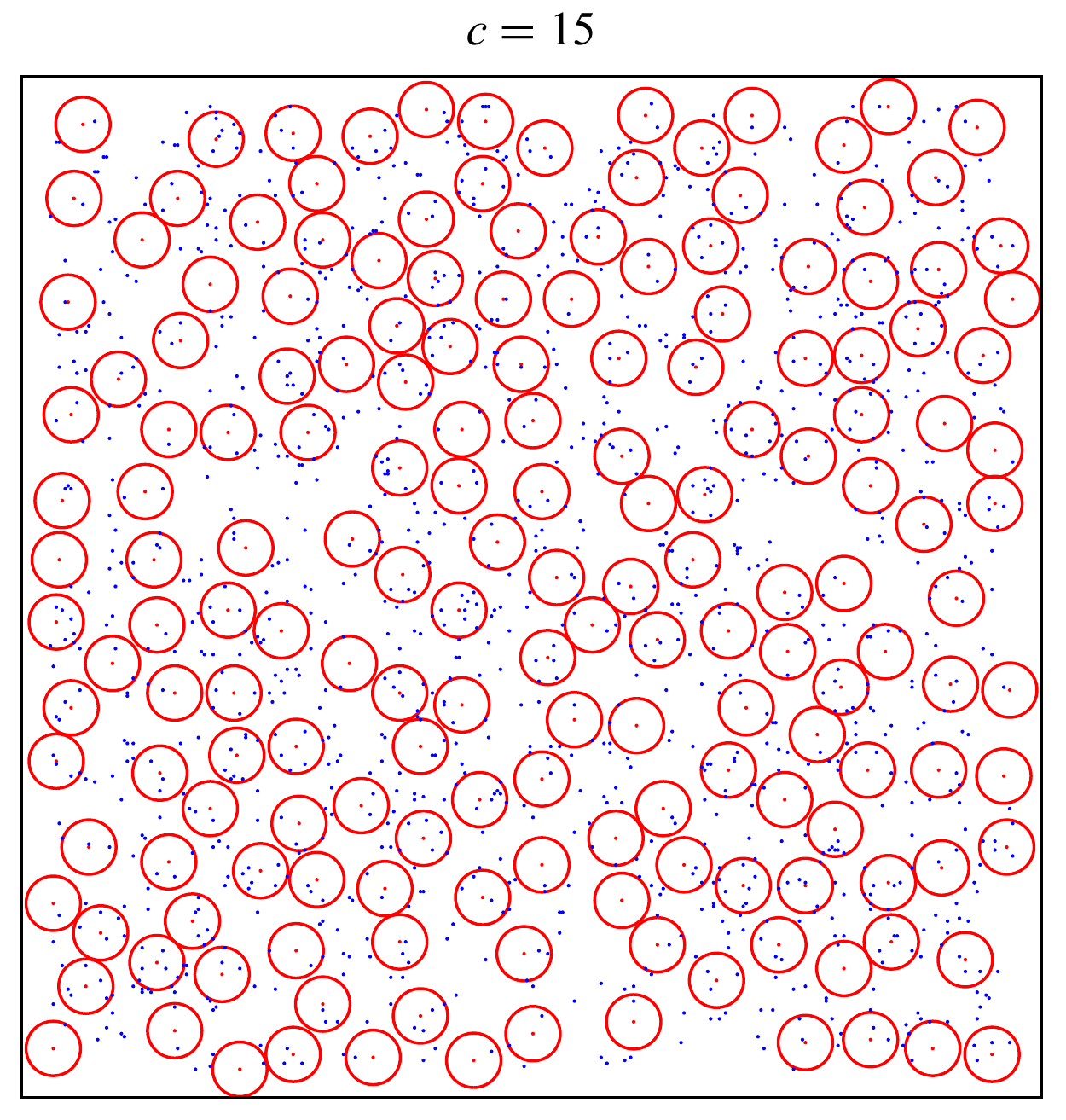}
  \caption{\RSA in 2D}
  \label{fig:sfig1}
\end{subfigure}%
\begin{subfigure}{.5\textwidth}
  \centering
  \includegraphics[scale=.45]{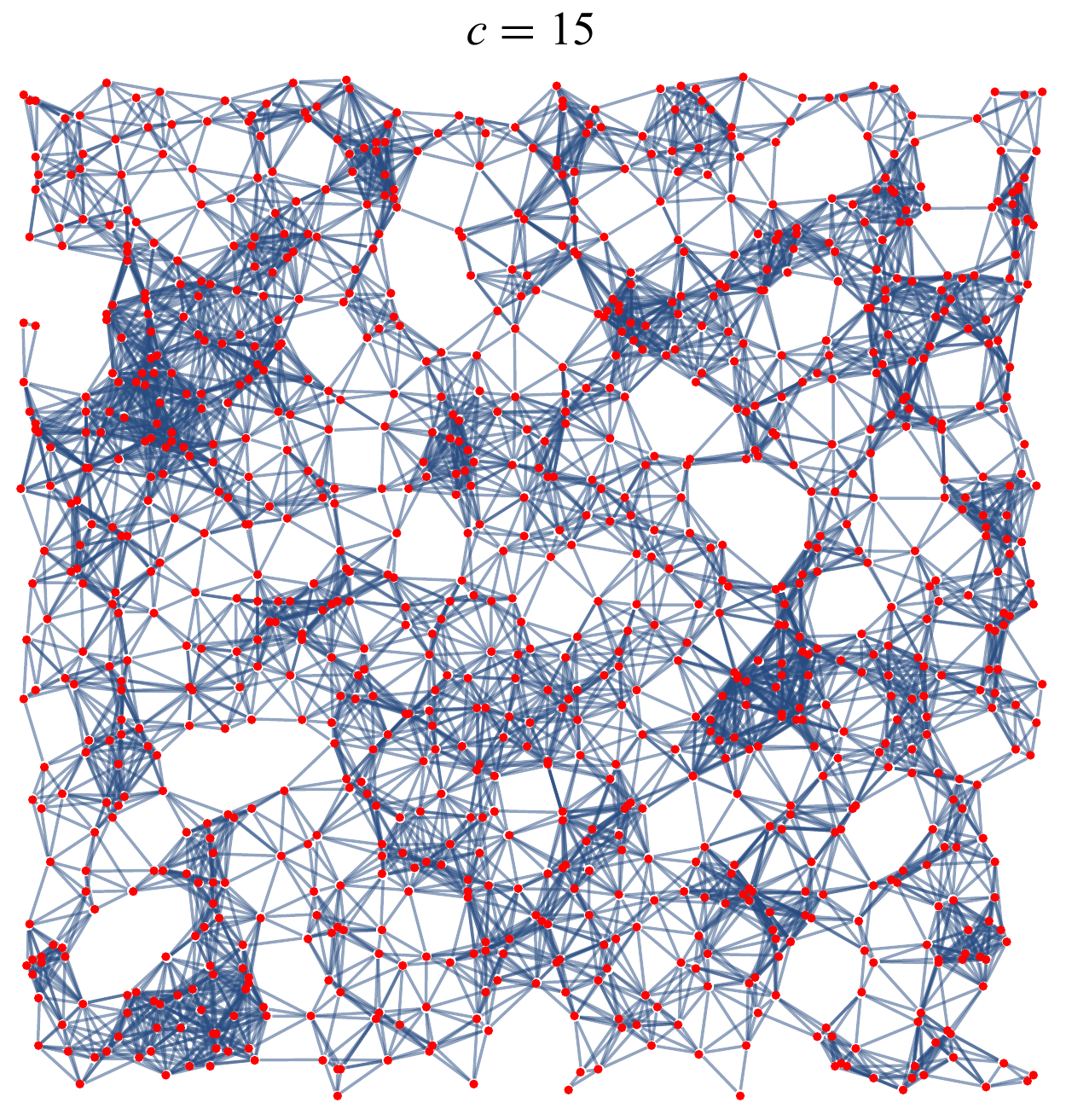}
  \caption{\RGG network}
\label{fig:clique}
\end{subfigure}
\caption{(a) Random sequential adsorption in 2D with density $c=15$.
Dots indicate the centers of accepted (red) and discarded (blue) spheres.
(b) $\RGG(15,2)$ graph with 1000 vertices: Two vertices share an edge if they are less than $2r$ distance apart, where $r$ is such that a vertex has on average $c=15$ neighbors. Notice the many local clusters. 
}\label{fig:rsa}
\end{figure}

Equivalently, we may think of the interaction network of the $n$ chosen centers of spheres by drawing an edge between two points if they are at most $2r$ distance apart. 
This is because a deposition attempt can block another deposition attempt if and only if the centers are at most $2r$ distance apart.
The obtained random graph is known as the random geometric graph ($\RGG$) \cite{P2003}.
The fraction of accepted spheres can be obtained via the following greedy algorithm to find independent sets of $\RGG$: Given a graph $G$, initially, all the vertices are declared inactive.  Sequentially activate uniformly chosen inactive vertices of the graph and block the neighborhood until all the inactive vertices are exhausted. 
We refer to the above greedy algorithm as $\RSA$ on the graph $G$.
If $G$ has the same distribution as $\RGG$ on $n$ vertices, then the final set of active vertices has the same distribution as the number of accepted spheres in the continuum after $n$ deposition attempts.
Thus, we one can equivalently study $\RSA$ on $\RGG$ to obtain the fraction of accepted spheres when $\RSA$ is applied in continuum. 

The precise setting in this paper considers $\RSA$
in a finite-volume box $[0,1]^d$ with periodic boundary, filled with `small' spheres of radius $r$ and volume $V_d(r)$ \cite{PY02,SBVK2014,SJK15}. 
Since the volume of $[0,1]^d$ is~1, the probability that two randomly chosen vertices share an edge in the interaction network is equal to  the  volume  of a  sphere  of  radius $2r$ given by 
$V_d(2r)=\pi^{d/2} (2r)^d/\Gamma(1+d/2)
$.
Thus the average vertex degree in the $\RGG$  is $c=n V_d(2r)$, and
since $c$ is also the average number of overlaps per sphere, with all other attempted spheres, we interchangeably use the terms density and average degree for $c$.
We operate in the sparse regime, where both $r\to 0$ and $n\to \infty$, so that $c\geq 0$ is an arbitrary but fixed constant.
In fact, maintaining a constant density $c$ in the large-network limit is necessary to observe a non-degenerate limit of the fraction of accepted spheres.
In other words, as we will see, the jamming fraction converges to 1 or 0 when $c$ converges to 0 or infinity. 
Thus, in order for $c$ to remain fixed as $n\to\infty$, the radius  should scale as a function of $n$ according to
\begin{equation}\label{radius}
r=r(n)=\frac{1}{2}\left[\frac{c \Gamma(1+d/2)}{n\pi^{d/2}}\right]^{1/d}.
\end{equation}
Notice that it is equivalent to consider the deposition of spheres with fixed radii into a box of growing volume.
We parameterize the $\RGG$ model by the density $c$ and the dimension $d$, and henceforth write this as $\RGG(c,d)$.
A typical instance of $\RGG(5,2)$ with $n = 1000$ vertices is shown in Fig.~\ref{fig:clique}.
Let $J_n(c,d)$ be the fraction of active vertices in the $\RGG(c,d)$ model on $n$ vertices.
While it was proved in~\cite{PY02} that $\lim_{n\to\infty}J_n(c,d)= J(c,d)$ exists, no quantitative characterization of $J(c,d)$ for dimensions $\geq 2$ has been provided till date, and 
so far the main methods to study this problem rely on extensive simulations~\cite{DC02,B11,S1967,TUS2006,ZT2013,
TST1991,F1980}.

In this paper we  propose a novel approach for the study of the fraction of accepted spheres that considers $\RSA$ on a clustered random graph model, designed to match the local spatial properties of the $\RGG$ model in terms of average degree and clustering.
Contrary to the $\RGG$ model, the proposed random graph model is amenable to rigorous mathematical treatment, including exact analysis of the limiting jamming fraction and its fluctuations. 

The paper is structured as follows: Section~\ref{sec:CRG} introduces the clustered random graph and the correspondence with the random geometric graph. Section~\ref{sec:results} presents the main results for the jamming fraction in the mean-field regime. 
We also show through extensive simulations that the mean-field approximations are accurate for all densities and dimensions. 
Sections~\ref{sec:proof}--\ref{sec:clustering-coeff} contain all the proofs, and we conclude in Section~\ref{sec:discussion} with a discussion.

\section{Clustered random graphs}\label{sec:CRG}
\begin{figure}
\centering
\includegraphics[scale=.6]{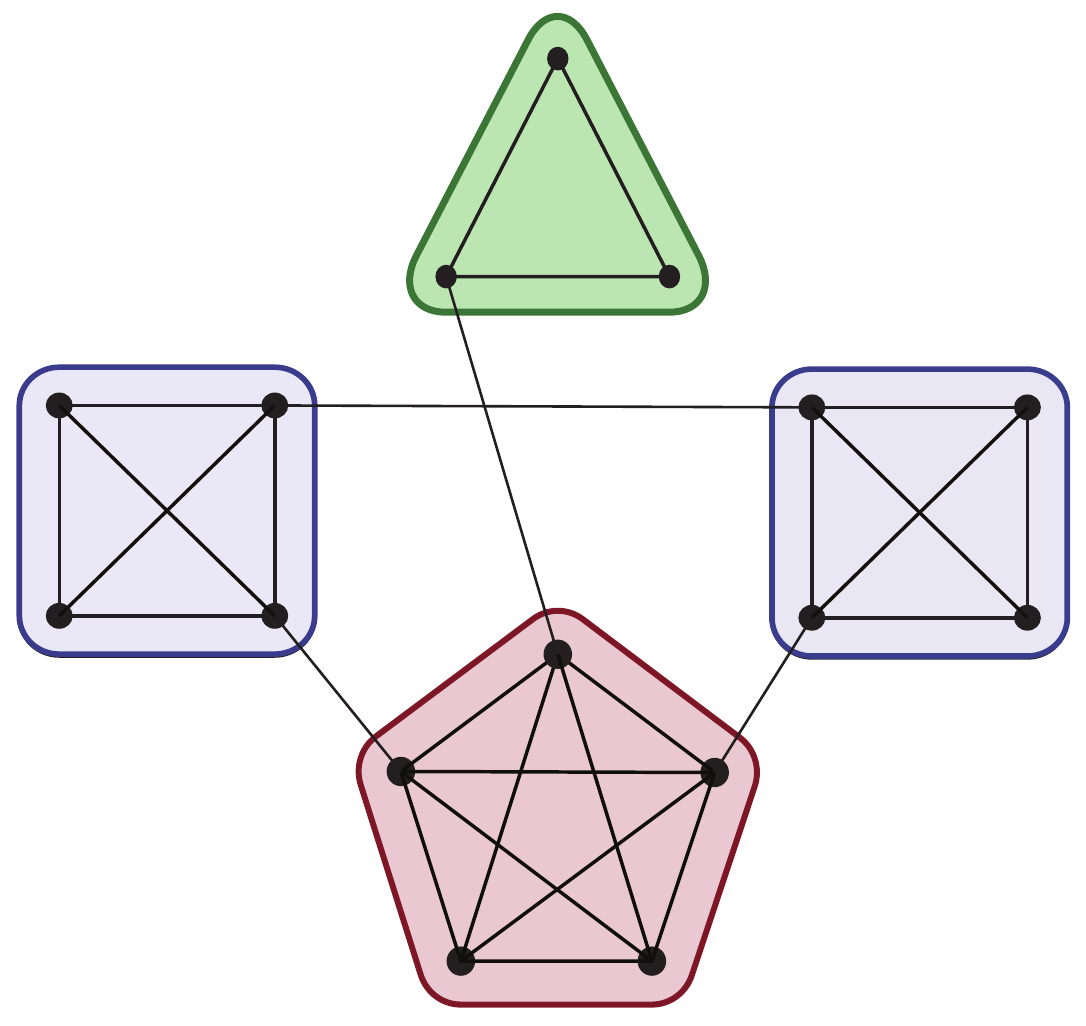}
\caption{Example topology generated by the $\CRG(c,\alpha)$ model.}
\label{fig:crg}
\end{figure}
Random graphs serve to model large networked systems, but are typically unfit for capturing local clustering in the form of relatively many short cycles. This can be resolved by locally adding so-called households or small dense graphs~\cite{BBS13,CL14,T07,N09,KN10,SVV16,SVV16SR,HLS15}.
Vertices in a household have a much denser connectivity to all (or many) other household members, which enforces local clustering.
We now introduce a specific household model, called clustered random graph model ($\CRG$), designed for the purpose of analyzing the $\RSA$ problem.
An arbitrary vertex in the \CRG model has local or short-distance connections with nearby vertices, and global or long-distance connections with the other vertices. When pairing vertices, the local and global connections are formed according to different statistical rules.
The degree distribution of a typical vertex is taken to be Poisson$(c)$ (approximately) in both the $\RGG$ and $\CRG$ model. 
Thus a typical vertex, when activated, blocks approximately Poisson$(c)$ other vertices.
In the $\CRG(c,\alpha)$ model however, the total mass of connectivity measured in the density parameter $c$, is split into $\alpha c$ to account for direct local blocking and $(1-\alpha)c$ to incorporate the propagation of spatial correlations over longer distances.
The $\CRG(c,\alpha)$ model with $n$ vertices is then defined as follows (see Fig.~\ref{fig:crg}):
\begin{itemize}
\item Partition the $n$ vertices into random households of size $1+\mathrm{Poisson}(\alpha c)$. This can be done by sequentially selecting
$1+\mathrm{Poisson}(\alpha c)$ vertices uniformly at random and declaring them as a household, and repeat this procedure until at some point
the next $1+\mathrm{Poisson}(\alpha c)$ random variable is  at most the number of remaining vertices. 
All the remaining vertices are then declared a household too, and the household formation process is completed.
\item Now that all vertices are declared members of some household, the random graph is constructed according to a local and a global rule.
The local rule says that all vertices in the same household get connected by an edge, leading to complete graphs of size 1+Poisson($\alpha c$).
The global rule adds a connection between
 any two vertices belonging to two different households with probability $(1-\alpha)c/n$.
 \end{itemize}
This creates a class of random networks with average degree $c$ and tunable level of clustering via the free parameter $\alpha$.
With the goal to design a solvable model for the $\RSA$ process, the $\CRG(c,\alpha)$ model
has $nc/2$ connections to build a random structure that mimics the local spatial structure of the $\RGG(c,d)$ model on $n$ vertices.

Seen as the topology underlying the $\RSA$ problem, the $\CRG(c,\alpha)$ model incorporates local clusters of overlapping spheres, which occur naturally in random geometric graphs;
 see Fig.~\ref{fig:clique}.
We can now also consider \RSA on the $\CRG(c,\alpha)$ model, by using the greedy algorithm that constructs an independent set on the graph by sequentially selecting vertices uniformly at random, and placing them in the independent set unless they are adjacent to some vertex already chosen. 
The jamming fraction $J^\star_n(c,\alpha)$ is then the size of the greedy independent set divided by the network size $n$.
From a high-level perspective, we will solve the \RSA problem on the $\CRG(c,\alpha)$ model, and translate this solution into an equivalent result for $\RSA$ on the $\RGG(c,d)$. 

Our ansatz is that for large enough $n$, a unique relation can be established between dimension $d$ in $\RGG$ and the parameter $\alpha = \alpha_d$ in $\CRG$, so that the jamming fractions are comparable, i.e.,~$J_n(c, d)\approx J^\star_n(c, \alpha_d),$
and virtually indistinguishable in the large network limit.
In order to do so, we map the $\CRG(c,\alpha)$ model onto the $\RGG(c,d)$  model by imposing two natural conditions.
The first condition matches the average degrees in both topologies, i.e., $c$ is chosen to be equal to $nV_d(2r)$.
The second condition tunes the local clustering. 

Let us first describe the clustering in the $\RGG$ model.
Consider two points chosen uniformly at random in a $d$-dimensional hypersphere of radius $2r$. 
Then what is the probability that these two points are themselves at most $2r$ distance apart?  
From the $\RGG$ perspective, this corresponds to the probability that, conditional on two vertices $u$ and $v$ being neighbors, a uniformly chosen neighbor $w$ of $u$ is also a neighbor of $v$, which is known as the local clustering coefficient \cite{N2010}. 
In the $\CRG(c,\alpha)$ model, on the other hand, the relevant measure of clustering is $\alpha$, the probability that a randomly chosen neighbor is a neighbor of one of its household members. 
 We then choose the unique $\alpha$-value that equates to the clustering  coefficient of $\RGG$.
Denote this unique value by $\alpha_d$, to express its dependence on the dimension~$d$.
In Section~\ref{sec:clustering-coeff}, we show that
\begin{eq}\label{eq:alpha-choice}
  \alpha_d = d\int_{0}^1x^{d-1}I_{1-\frac{x^2}{4}}\Big(\frac{d+1}{2}, \frac{1}{2}\Big)\dif x
\end{eq}
with $I_z(a,b)$ the normalized incomplete beta integral.
Table~\ref{tab1} shows the numerical values of $\alpha_d$ for dimensions 1 to 5.
\begin{table}
\centering
\begin{tabular}{C{.8cm}|C{1.5cm}|C{1.5cm}|C{1.5cm}|C{1.5cm}|C{1.5cm}}
\hline
{\rm $d$} &1 & 2 & 3 & 4 & 5 \\
\hline
$\alpha_d$ &0.750000&  0.586503 & 0.468750 & 0.379755 & 0.310547\\
\hline
\end{tabular}%
\caption{$\alpha_d$ for dimensions 1 to 5.}
\label{tab1}
\end{table}
With the uniquely characterized $\alpha_d$ in \eqref {eq:alpha-choice}, the $\CRG(c,\alpha_d)$ model can now serve as a generator of random topologies for guiding the \RSA process.

In contrast to the Euclidean space, $\RSA$ on the $\CRG(c,\alpha_d)$ model  is analytically solvable, even at later times when the filled space becomes more dense (large $c$). 
To do so, we will extend the mean-field techniques recently developed for analyzing \RSA on random graph models~\cite{DLM16,BJL15,BJM13,SJK15}.
The main goal of these works was to find greedy independent sets (or colorings) of large random networks.
All these results, however, were obtained for non-geometric random graphs, typically used as first approximations for sparse interaction networks in the absence of any known geometry.

\section{Main results}\label{sec:results}

\subsection{Limiting jamming fraction}
For the $\CRG(c,\alpha)$ model on $n$ vertices, recall that $J^\star_n(c,\alpha)$ denotes the fraction of active vertices at the end of the $\RSA$ process.
We then have the following result, which characterize the limiting fraction:


\begin{theorem}[Limiting jamming fraction]\label{th:fluid}
 For any $c>0$ and $\alpha\in [0,1]$, as $n\to\infty$, $J^\star_n(c,\alpha)$ converges in probability to $J^\star(c,\alpha)$, where $J^\star(c,\alpha)$ is the smallest nonnegative root of the deterministic function $x(t)$ described by the integral equation
 \begin{equation}\label{diffeq}
 x(t)=1-t-\int_0^t\left(\frac{x(s)\alpha c}{1-(\alpha c+1) s}+(1-\alpha)cx(s)\right)\dif s.
\end{equation}
\end{theorem}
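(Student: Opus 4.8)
The plan is to set up the RSA process on the $\CRG(c,\alpha)$ model as a Markov chain tracked by a small number of bookkeeping statistics, identify the fluid (law-of-large-numbers) limit of those statistics, and read off the jamming fraction as the first zero of the limiting trajectory. Run the greedy algorithm in continuous or rescaled-discrete time so that at each step one vertex is examined: either it is still inactive (then it is activated and joins the independent set, and its household-mates and global neighbors get blocked), or it has already been blocked (then nothing happens). Rescale time by $n$, so $t$ runs over $[0,1)$. The key state variables at time $t$ are: $x_n(t)$, the fraction of vertices that are still inactive; and a way to keep track of the household structure among the inactive vertices — essentially the fraction of inactive vertices lying in households that still contain $k$ inactive vertices, for each $k\ge 1$ (or a generating-function/summary version of this). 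The count of $t$ itself already records the fraction of vertices that have been "touched" (examined), so the fraction blocked is $t - (\text{fraction activated})$ and the fraction activated converges to $J^\star(c,\alpha)$; the process terminates when $x_n(t)$ hits $0$, i.e.\ at the first zero of the limit $x(t)$.

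The main steps: (i) Write the one-step conditional expected increments of the state variables given the current configuration. When the examined vertex is inactive, with probability roughly $x_n(t)$ of the remaining mass, it is activated; it then blocks its global neighbors — each other vertex independently with probability $(1-\alpha)c/n$, contributing an expected $-(1-\alpha)c\,x_n(t)$ to the inactive fraction per unit rescaled time — and it blocks the rest of its own household, which is where the term $\tfrac{x(s)\alpha c}{1-(\alpha c+1)s}$ comes from: conditioned on being inactive at time $t$, the expected number of still-inactive household-mates is the mean size of the "residual" household minus one, and a Poisson$(\alpha c)$ household thinned by the examination process down to an $(1-t)$-fraction and further size-biased by the activated vertex gives that rational expression in $t$ (the denominator $1-(\alpha c+1)t$ is the natural normalizer: total mass $1$ minus the $t$-fraction examined minus the $\alpha c\,t$ expected household-mates already consumed alongside them). (ii) Check the drift is Lipschitz on $\{1-(\alpha c+1)t>\delta\}$ and the increments are uniformly bounded (or have light tails), so that Wormald's differential-equation method / a functional law of large numbers (as in the cited works \cite{DLM16,BJL15,BJM13,SJK15}) applies and gives $x_n(\cdot)\pto x(\cdot)$ uniformly on compacts, with $x$ solving \eqref{diffeq}. (iii) Argue that the process does stop at the first zero of $x$: before that zero $x_n$ stays bounded away from $0$ so the algorithm keeps running, and once $x$ reaches $0$ the inactive set is $o(n)$ and the remaining contribution to the jamming fraction is negligible; hence $J^\star_n(c,\alpha)\pto J^\star(c,\alpha)$, the smallest nonnegative root. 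One must also verify that $J^\star(c,\alpha)$ is well defined, i.e.\ that $x$ does reach $0$ before the singularity at $t=1/(\alpha c+1)$ (or handle the case it doesn't, giving $J^\star=1-1/(\alpha c+1)$-type boundary behavior) — a short monotonicity/ODE-comparison argument.

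The hard part will be step (i): correctly identifying the right finite set of summary statistics so that the dynamics close, and justifying that the household-residual term collapses to the single scalar $\tfrac{x(s)\alpha c}{1-(\alpha c+1)s}$ rather than requiring the full profile $\{$inactive-fraction in size-$k$ households$\}_{k\ge1}$. The cleanest route is probably to show that, because households are formed by i.i.d.\ Poisson$(\alpha c)$ sizes and vertices are examined in uniformly random order, the joint law of (number of inactive vertices in a size-biased household, position in the examination order) stays in an explicit product/Poisson-thinning form throughout, so that the conditional expected number of inactive household-mates of a freshly examined inactive vertex is exactly a deterministic function of $t$ in the limit — this "propagation of chaos for the household profile" is the technical crux. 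Establishing the matching concentration so the fluctuation terms vanish is then routine (martingale increments of size $O(1/n)$ in the main variables, Azuma/Doob), and the identification of \eqref{diffeq} follows by collecting the two blocking mechanisms ($-t$ for "examined", the household term, and $-(1-\alpha)cx(s)$ for global neighbors) into the stated integral equation.
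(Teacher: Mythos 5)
Your overall skeleton (Markov bookkeeping, drift computation, functional LLN/Wormald-type argument, Gr\H{o}nwall, identification of the jamming fraction with the first zero of $x$, and a separate argument that the root lies before the singularity at $1/(1+\alpha c)$) matches the paper's strategy, but the proposal has a genuine gap at exactly the point you flag as ``the technical crux.'' You propose to track the profile of households by their number of inactive members and to prove a propagation-of-chaos/product-form lemma showing this infinite-dimensional profile collapses to the scalar $\alpha c\,x(s)/(1-(\alpha c+1)s)$; no proof of that lemma is given, and the heuristic you offer (Poisson household thinned to a $(1-t)$-fraction, then size-biased) glosses over the essential correlation: conditioned on the focal vertex being still unblocked, \emph{none} of its household mates can be active, and mates can only have been removed through distant edges, so the naive thinning computation does not close the dynamics --- this is precisely why the one-variable process is non-Markovian. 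The paper sidesteps the whole issue by a different device: it constructs the graph \emph{along with} the exploration (exchangeability of the household-formation rule lets households be revealed only when a vertex activates), and adds a second coordinate $Y_n(t)=|\U(t)\cup\BO(t)|$, the number of vertices not yet assigned to any discovered household. Then the household mates of a newly active vertex are drawn uniformly from these $Y_n$ vertices, the increment of $X_n$ is an explicit Hypergeometric plus Binomial, $(X_n,Y_n)$ is a closed two-dimensional Markov chain with $y(t)=1-(1+\alpha c)t$ in the limit, and the drift $\alpha c\,x/y+\lambda x$ follows exactly, with no propagation-of-chaos argument needed. Without this (or an actual proof of your profile-collapse lemma), your proposal does not establish \eqref{diffeq}.

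A secondary inconsistency: the $-t$ drift in \eqref{diffeq}, and the identification of $J^\star$ with the smallest root of $x$, correspond to a clock in which exactly one vertex is \emph{activated} per time step (the paper selects the next vertex uniformly from the unexplored set, so $|\mathrm{A}(t)|=t$ and the jammed count is the hitting time of $0$ by $X_n$). Your scheme examines a uniformly chosen vertex and does nothing if it is already blocked; under that clock the limiting equation would be a time-changed version of \eqref{diffeq} (with the activation rate $x/(1-t)$ multiplying the losses), and the fraction activated would \emph{not} equal the first zero of your $x(t)$, contradicting the way you read off the theorem. This is fixable by switching to the activation clock, but as written the parametrization and the conclusion do not fit together. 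The remaining ingredients you cite (Lipschitz drift away from the singularity, martingale/Azuma concentration, and a comparison argument that the root is strictly below $1/(1+\alpha c)$ --- the paper does the latter via the explicit solution of the linear ODE) are indeed routine once the two-dimensional Markov description is in place.
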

The ODE~\eqref{diffeq} can be understood intuitively in terms of the algorithmic description in Section~\ref{sec:algo} that sequentially explores the graph while activating the allowed vertices.
Rescale time by $n$, so that after rescaling the algorithm has to end before time $t=1$ (because the network size is $n$). Now think of $x(t)$ as the fraction of neutral vertices at time $t$.
Then clearly $x(0) = 1$, and the drift $-t$ says that one vertex activates per time unit.
Upon activation, a vertex on average blocks its $\alpha c$ household members and  $(1-\alpha)c$ other vertices outside its household.
 At time $t$, the fraction of vertices that are not members of any discovered households equals on average $(1 - (1+\alpha c)t)$ and all vertices which are not part of any discovered households, are potential household members of the newly active vertex (irrespective of whether it is blocked or not). 
 Since household members are uniformly selected at random, only a fraction $x(t) / (1 - (1+\alpha c)t)$ of the new $\alpha c$ household members will belong to the set of neutral vertices. 
 Moreover, since all $x(t)n$ vertices are being blocked by the newly active vertex with probability $(1-\alpha)c/n$, on average $(1-\alpha)c x(t)$ neutral  vertices will be blocked due to distant connections.
  Notice that the graph will be maximally packed when $x(t)$ becomes zero, i.e., there are no neutral vertices that can become active. This explains why $J^\star(c,\alpha)$ should be the time $t$ when $x(t)=0$, i.e., smallest root of~\eqref{diffeq}.
  
\begin{figure}
\centering
\includegraphics[scale=0.6]{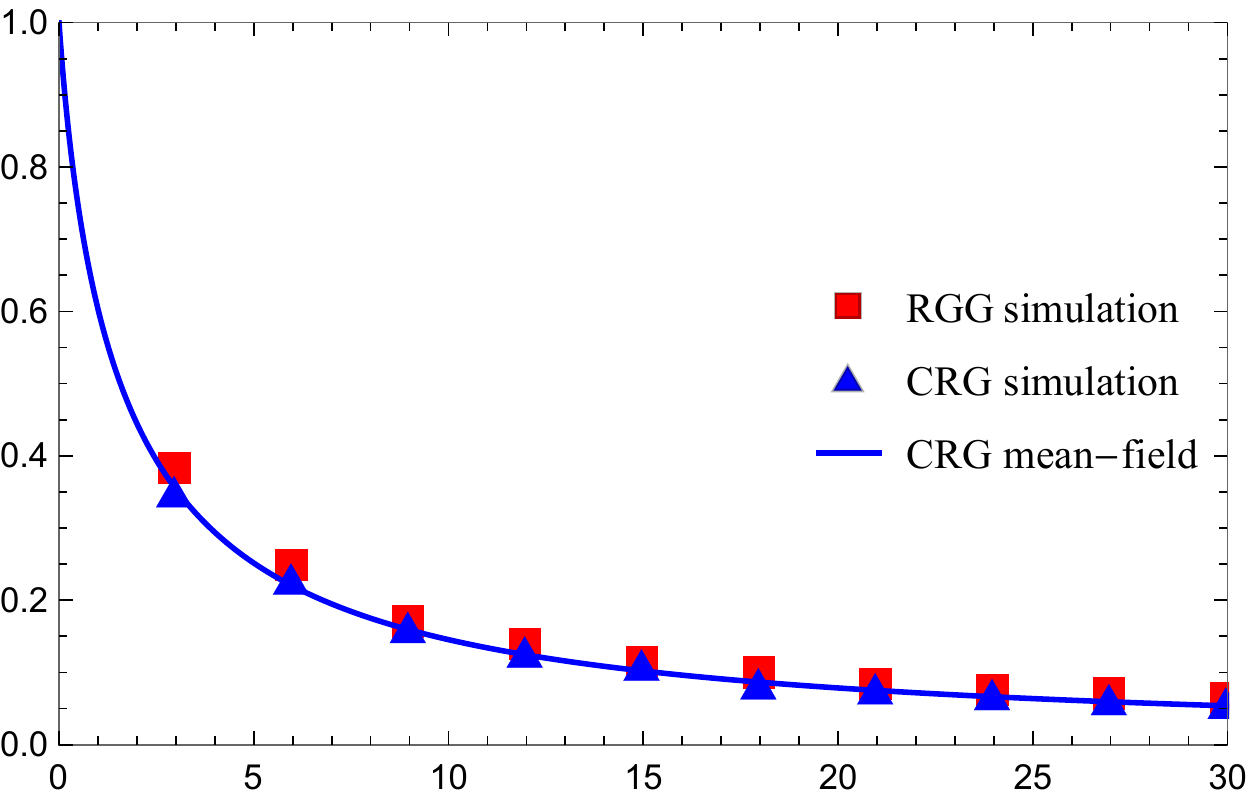}
\caption{Validation of the mean-field limit $J^\star(c,\alpha_2)$ with the simulation results from $\CRG(c,\alpha_2)$, and $\RGG(c,2)$ with 1000 vertices for $0\leq c\leq 30$.}
\label{fig:crg-rgg}
\end{figure}
Upon substituting $\alpha=\alpha_d$, $J^\star(c,\alpha_d)=\lim_{n\to\infty}J^\star_n(c,\alpha_d)$ is completely characterized by \eqref{diffeq} and serves an approximation for the intractable counterpart $J(c,d)$, the limiting jammed fraction for the $\RGG(c,d)$ model.
The choice of $\alpha_d$, as discussed earlier, is given by \eqref{eq:alpha-choice} and shown in Table~\ref{tab1}.
Fig.~\ref{fig:crg-rgg} validates the mean-field limit for the \CRG model, and shows the theoretical values $J^\star(c,\alpha_2)$  from Theorem~\ref{th:fluid}, along with the simulated values of $J_n(c,2)$ on the $\RGG(c,2)$ model for values of $c$ ranging from 0~to~30.
Fig.~\ref{fig:dimension-one-to-five} shows further comparisons between $J^\star(c,\alpha_d)$ and $J_n(c,d)$ for dimensions $d=3,4,5$, and densities $0\leq c\leq 30$.
All simulations use $n=1000$ vertices.
\begin{figure}
\begin{subfigure}{.33\textwidth}
  \centering
  \includegraphics[scale=.4]{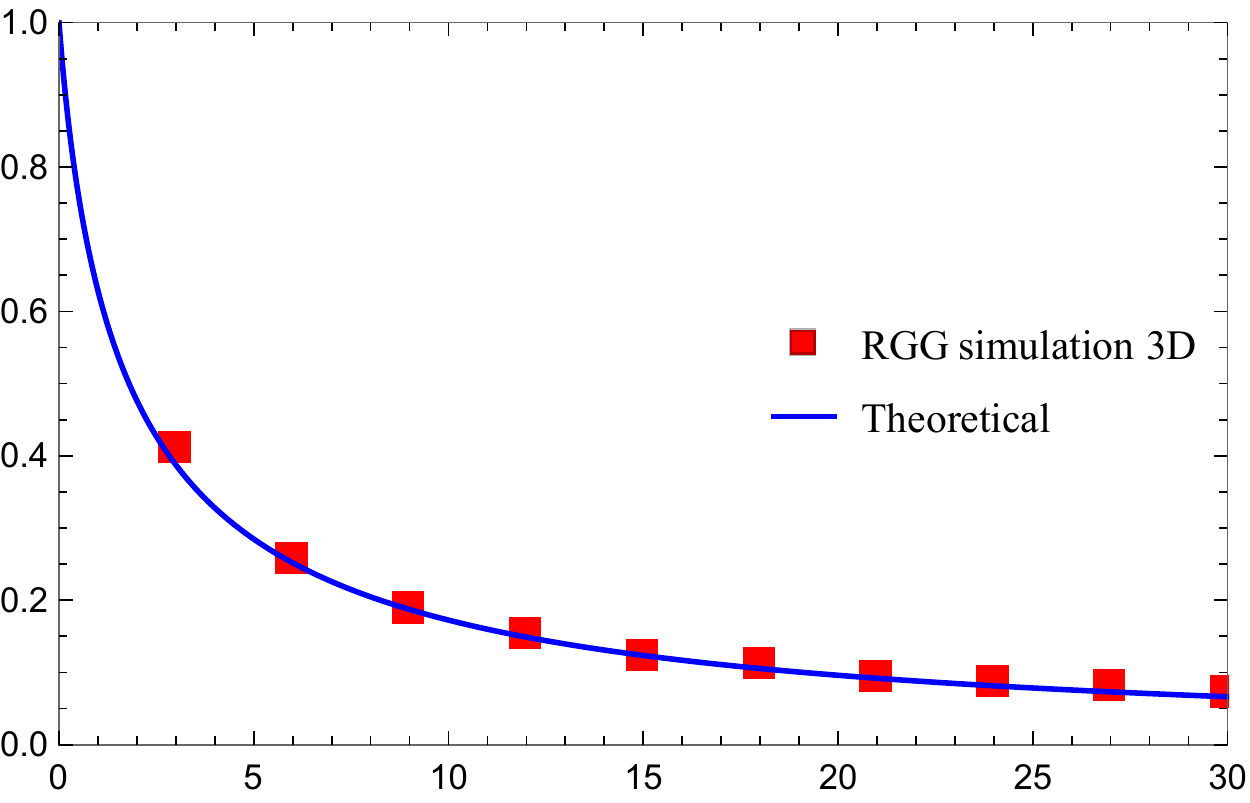}
  \caption{3D}
  \label{fig:3d}
\end{subfigure}%
\begin{subfigure}{.33\textwidth}
  \centering
  \includegraphics[scale=.4]{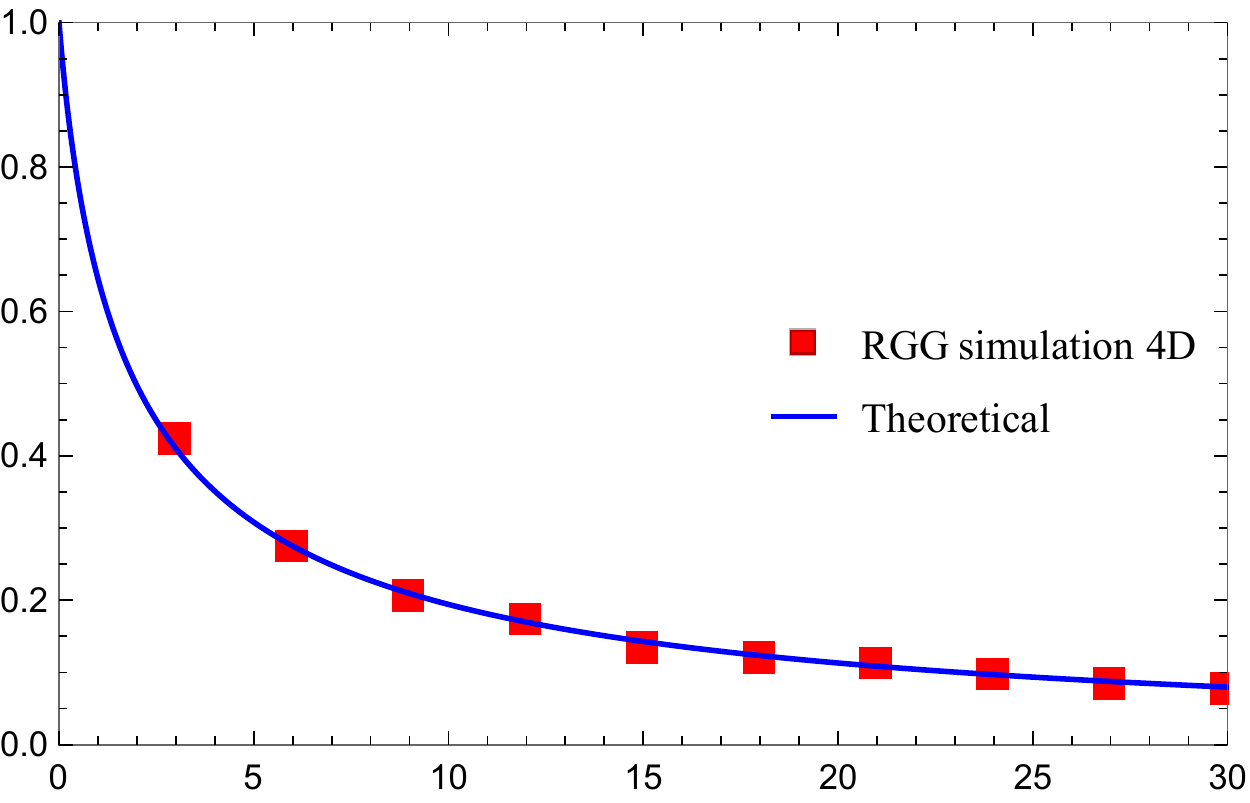}
  \caption{4D}
\label{fig:4d}
\end{subfigure}
\begin{subfigure}{.33\textwidth}
  \centering
  \includegraphics[scale=.4]{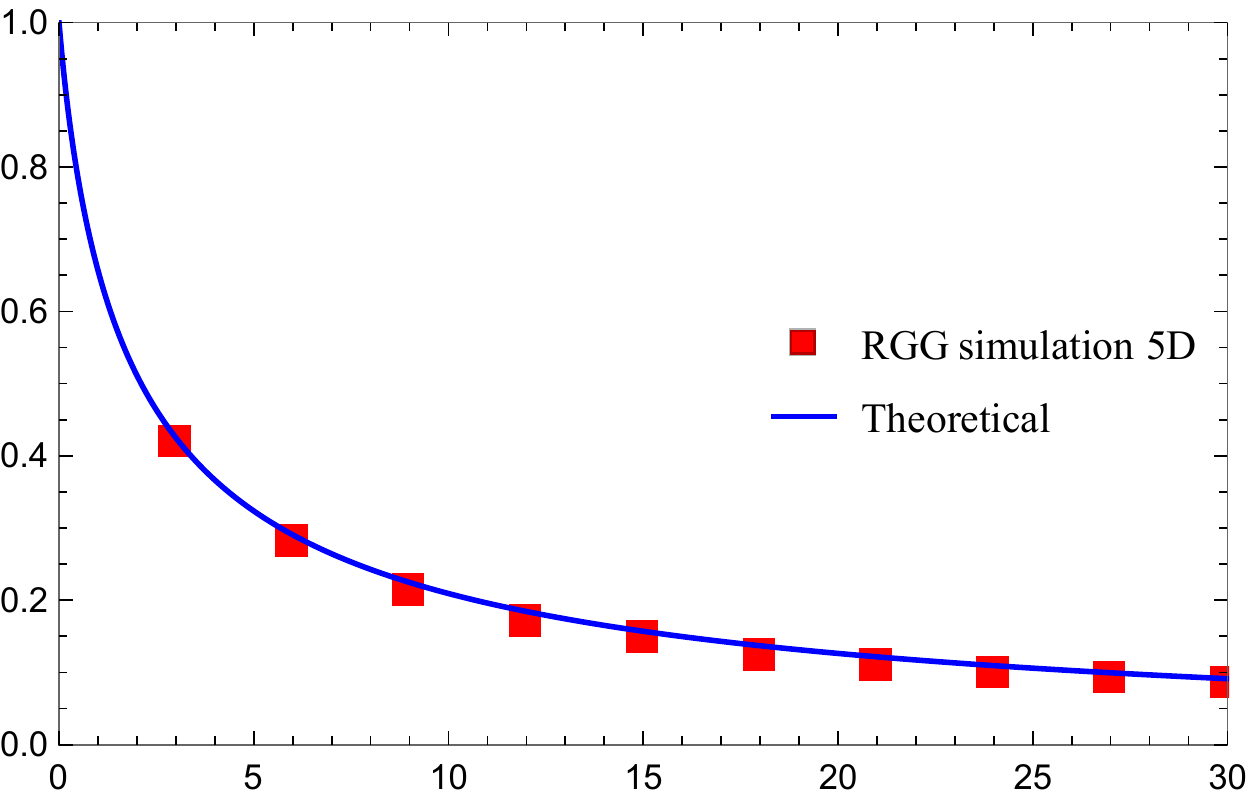}
  \caption{5D}
\label{fig:5d}
\end{subfigure}
\caption{Simulation with 1000 vertices of $\RGG(c,d)$ and the value of $J^\star(c,\alpha_d)$   for $0\leq c\leq 30$ and $d=3,4,5$. 
\label{fig:dimension-one-to-five}
}
\end{figure}
The remarkable agreement of the $J^\star(c,\alpha_d)$-curves with the simulated results across all dimensions shows that the integral equation \eqref{diffeq} accurately describes the mean-field large-network behavior of the $\RSA$ process, not only for the \CRG model, but also for the $\RGG$ model.
The following result is a direct consequence of Theorem~\ref{th:fluid}, and gives a simple law to describe the asymptotic fraction $J^\star(c,\alpha_d)$ in the large density ($c\to\infty$) regime.
\begin{corollary}\label{cor:J-large-c} As  $c\to\infty$, $
J^\star(c,\alpha_d) \sim (1+\alpha_dc)^{-1}.
$
\end{corollary}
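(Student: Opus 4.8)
The plan is to solve the integral equation \eqref{diffeq} in closed form and then read off the asymptotics of its smallest root. First I would abbreviate $a := \alpha c$ and $b := (1-\alpha)c$ and differentiate \eqref{diffeq} to get the linear first-order ODE
\[
x'(t) = -1 - \Bigl(\frac{a}{1-(a+1)t} + b\Bigr) x(t), \qquad x(0) = 1.
\]
Multiplying by the integrating factor $\mu(t) := (1-(a+1)t)^{-a/(a+1)}\,\e^{bt}$ (so $\mu(0)=1$) turns this into $(\mu x)' = -\mu$, whence, on $[0,1/(a+1))$,
\[
x(t) = \mu(t)^{-1}\Bigl(1 - \int_0^t \mu(s)\,\dif s\Bigr).
\]
Since $\mu > 0$ there and $x(0)=1>0$, the smallest root $J^\star(c,\alpha)$ is the value of $t$ at which $\int_0^t \mu(s)\,\dif s = 1$, provided this happens strictly before the singularity of $\mu$ at $t = 1/(a+1)$.

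Next I would substitute $v = (a+1)s$ and set $w := (1+\alpha c)\,J^\star(c,\alpha)$, which recasts the defining relation $\int_0^{J^\star}\mu(s)\,\dif s = 1$ as
\[
\int_0^w (1-v)^{-a/(a+1)}\,\e^{\,\frac{b}{a+1}\,v}\,\dif v = a+1.
\]
The key elementary fact is the exact evaluation $\int_0^1 (1-v)^{-a/(a+1)}\,\dif v = \bigl(1 - \tfrac{a}{a+1}\bigr)^{-1} = a+1$. Because the integrand above dominates $(1-v)^{-a/(a+1)}$, the left-hand side already reaches $a+1$ by $v=1$, so $w \le 1$ — which both shows $J^\star(c,\alpha) \le 1/(a+1)$, justifying the closed form, and gives one of the two bounds I need. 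For the matching lower bound on $w$, I would use $\e^{bv/(a+1)} \le \e^{b/(a+1)}$ on $v \in [0,1]$ to obtain $a+1 \le \e^{b/(a+1)}\,(a+1)\bigl(1 - (1-w)^{1/(a+1)}\bigr)$, which rearranges to
\[
1 - w \;\le\; \bigl(1 - \e^{-b/(a+1)}\bigr)^{a+1}.
\]
Finally, with $\alpha = \alpha_d \in (0,1)$ fixed and $c \to \infty$, we have $b/(a+1) = \tfrac{(1-\alpha_d)c}{\alpha_d c + 1} \to \tfrac{1-\alpha_d}{\alpha_d}$, so the base $1 - \e^{-b/(a+1)}$ is eventually bounded away from $1$ while the exponent $a+1 = \alpha_d c + 1 \to \infty$; hence $1-w \to 0$. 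Together with $w \le 1$ this forces $w \to 1$, i.e. $(1+\alpha_d c)\,J^\star(c,\alpha_d) \to 1$, which is exactly the claim $J^\star(c,\alpha_d) \sim (1+\alpha_d c)^{-1}$. (The same argument works verbatim for any fixed $\alpha \in (0,1)$.)

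The computations are short once the explicit solution is in hand, so the delicate point — and the place the argument could break — is that the integrating-factor formula for $x(t)$ is valid only up to the singularity $t = 1/(a+1)$ of $\mu$; one must confirm \emph{a priori} that $\int_0^t \mu(s)\,\dif s$ attains $1$ before that time, which is precisely what the identity $\int_0^1 (1-v)^{-a/(a+1)}\,\dif v = a+1$, combined with the strict positivity of the exponential factor, delivers. A secondary subtlety is that the trivial bound $(1-v)^{-a/(a+1)}\e^{bv/(a+1)} \ge (1-v)^{-a/(a+1)}$ only gives the vacuous $w \ge 0$, so pinning $w$ to $1$ genuinely requires the upper bound $\e^{bv/(a+1)} \le \e^{b/(a+1)}$, whose usefulness rests on $b/(a+1)$ remaining bounded as $c \to \infty$ rather than diverging.
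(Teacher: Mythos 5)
Your proposal is correct, and its skeleton is the same as the paper's: differentiate \eqref{diffeq}, solve the linear ODE with an integrating factor (your closed form coincides with \eqref{eq:ode-sol}, since $a/(a+1)=(\mu-1)/\mu$ and $b=\lambda$), and characterize $J^\star$ as the smallest time at which the integral $\mathcal{I}(t)$ of \eqref{eq:sol-dif-eqn} reaches $1$. Where you genuinely differ is in how the root is pinned to $1/\mu$. The paper first argues (Claim~\ref{cl:upperbound}) that the root lies below $1/\mu$ by asserting that $\mathcal{I}(t)$ diverges as $t\uparrow 1/\mu$, and then, in the proof of the corollary, extracts a lower bound of the form $\tfrac{1}{\mu}\bigl(1-\e^{-\mu\e^{-\lambda/\mu}}\bigr)$ from a logarithmic estimate on the integrand. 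You instead substitute $v=\mu s$, use the exact evaluation $\int_0^1(1-v)^{-a/(a+1)}\,\dif v=a+1$, and sandwich the exponential factor via $1\le\e^{bv/(a+1)}\le\e^{b/(a+1)}$ on $[0,1]$, obtaining the two-sided bound $0\le 1-w\le\bigl(1-\e^{-b/(a+1)}\bigr)^{a+1}$ for $w=\mu J^\star$, which tends to $0$ because $b/(a+1)\to(1-\alpha_d)/\alpha_d$ stays bounded while the exponent $a+1\to\infty$. Both routes give $\mu J^\star\to 1$, but yours buys an explicit rate and is cleaner on the upper bound: in fact $\mathcal{I}(1/\mu)\le\e^{\lambda/\mu}<\infty$, so the paper's divergence claim in Claim~\ref{cl:upperbound} is not literally correct, whereas your observation that $\mathcal{I}$ already reaches $1$ by $t=1/\mu$ (because the beta integral alone equals $1$ there) is exactly the fact needed, and it repairs that point as a by-product. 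The paper's cruder logarithmic bound, on the other hand, requires no exact integral identity and still suffices for the stated asymptotics; in both arguments the essential structural input is that $\lambda/\mu$ remains bounded as $c\to\infty$, which you make explicit at the end.
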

Hence, for large enough $c$, $J^\star(c,\alpha_d)\approx(1+\alpha_d c)^{-1}$ serves as an approximation for all dimensions.
Due to the accurate prediction provided by the \CRG model, the total scaled volume $c{J}(c,d)/2^d$ covered by the deposited
spheres  in dimension $d$ can be well approximated.
Indeed, for large $c$, Corollary~\ref{cor:J-large-c}  yields $J^\star(c,\alpha_d)\sim 1/(\alpha_dc)$, and in any dimension~$d$, our model leads to a precise characterization of the covered volume given by
\begin{equation}
J^\star(c,\alpha_d)\times\frac{c}{2^d} = \frac{1}{2^d\alpha_d}.
\end{equation}

Notice that $\alpha_d\to 0$ as $d\to\infty$.
Thus, the interaction network described by the $\CRG(c,\alpha_d)$ model becomes almost like the (pure) mean-field Erd\H{o}s-R\'enyi random graph model, which supports the widely believed conjecture that in high dimensions the interaction network associated with the random geometric graph loses its local clustering property \cite{DGLU11}.
\subsection{Fluctuations of the jamming fraction}
The next theorem characterizes the fluctuations of $J_n^\star(c,\alpha)$ around its mean:
\begin{theorem}[{CLT for jamming fraction}]
\label{th:jam-diffusion}
As $n\to\infty$,
$$\sqrt{n}(J_n^\star(c,\alpha)-J^\star(c,\alpha))\dto Z,$$
where $Z$ has a normal distribution with mean zero and variance $V^\star(c,\alpha)$. Here  $J^\star(c,\alpha)$ is given by Theorem~\ref{th:fluid},
 and $V^\star(c,\alpha)=\sigma_{xx}(J^\star(c,\alpha))$ with $\sigma_{xx}(t)$ being the unique solution of the  system of differential equations, for $0\leq t<1/\mu$,\\
 \begin{eq}\label{defn:functions0}
  \frac{\dif \sigma_{xx}(t)}{\dif t}&=2\sigma_{xx}(t)f(t)+2\sigma_{xy}(t)g(t)+\beta(t),\\  
  \frac{\dif \sigma_{xy}(t)}{\dif t}&=\sigma_{xy}(t)f(t)+tg(t)\sigma^2+\sqrt{\beta(t)}\sigma\rho(t)
\end{eq}
with
\begin{eq}\label{defn:functions}
y(t)&=1-\mu t,\quad  f(t)=-\frac{\mu-1}{y(t)}-\lambda, \quad  g(t)=\frac{(\mu-1)x(t)}{y(t)^2},\\
 \beta(t)&=\left[\frac{(\mu-1)}{y(t)}+\lambda\right]x(t),\quad
  \rho(t)=\frac{\sigma}{\sqrt{\beta(t)}}\frac{x(t)}{y(t)}.
\end{eq}
\end{theorem}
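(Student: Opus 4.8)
The plan is to deduce Theorem~\ref{th:jam-diffusion} from a diffusion approximation for the very same exploration process that underlies Theorem~\ref{th:fluid}, in the spirit of the functional limit theorems for density-dependent Markov chains used in~\cite{DLM16,BJL15,BJM13}. Running the algorithm of Section~\ref{sec:algo}, let $X_k$ be the number of neutral vertices and $Y_k$ the number of vertices not yet belonging to any revealed household after $k$ steps, so $X_0=Y_0=n$. Because activating any household member blocks the whole household (the local rule makes households cliques), no neutral vertex can lie in a revealed household; hence $X_k\le Y_k$, and the household of the vertex activated at step $k$ may be generated on demand with size $1+\Poi(\alpha c)$ up to $O(1/n)$ sampling corrections. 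Writing $\mu=1+\alpha c$ and $\lambda=(1-\alpha)c$, this makes $(X_k,Y_k)$ Markov up to $o(1)$ with conditional drifts $\E[\Delta X_k\mid\mathcal F_{k-1}]=-1-(\mu-1)X_{k-1}/Y_{k-1}-\lambda X_{k-1}/n+O(1/n)$ and $\E[\Delta Y_k\mid\mathcal F_{k-1}]=-\mu+O(1/n)$; under $t=k/n$ this is exactly the drift field whose integral curve is $(x(t),y(t))$, with $y(t)=1-\mu t$ and $x$ the solution of~\eqref{diffeq}, and the $f,g$ in~\eqref{defn:functions} are precisely $\partial_x$ and $\partial_y$ of the $X$-drift evaluated along this curve.

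Next I would write the Doob decomposition $X_{\lfloor nt\rfloor}=X_0+A^X_{\lfloor nt\rfloor}+M^X_{\lfloor nt\rfloor}$ (compensator plus martingale), likewise for $Y$, and apply the martingale functional central limit theorem to $n^{-1/2}(M^X_{\lfloor n\cdot\rfloor},M^Y_{\lfloor n\cdot\rfloor})$. The moment hypotheses are immediate since household sizes are Poisson and the number of global neighbours is Binomial, so increments have uniformly bounded $(2+\delta)$-moments; the predictable quadratic variations then follow from the conditional second moments of $(\Delta X_k,\Delta Y_k)$. Conditionally on the activated vertex the household-blocking and global-blocking parts of $\Delta X_k$ are independent: a thinned-Poisson count contributes asymptotic variance-rate $(\mu-1)x(t)/y(t)$ and the global blocking contributes $\lambda x(t)$, so the $X$-rate is $\beta(t)$ of~\eqref{defn:functions}; the $Y$-rate is $\sigma^2:=\mu-1$; and the cross-rate is $(\mu-1)x(t)/y(t)=\sqrt{\beta(t)}\,\sigma\rho(t)$, coming from $\mathrm{Cov}\big(S,\mathrm{Bin}(S-1,x/y)\big)=(\mu-1)x/y$ for $S=1+\Poi(\mu-1)$. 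Combined with Theorem~\ref{th:fluid}, the centred process $Z^{(n)}(t)=\sqrt n\big(X_{\lfloor nt\rfloor}/n-x(t),\,Y_{\lfloor nt\rfloor}/n-y(t)\big)$ converges on every compact subinterval of $[0,1/\mu)$ to the Gaussian solution of the linear stochastic differential equation with Jacobian drift matrix having first row $(f,g)$ and second row $(0,0)$ and the above diffusion matrix; its covariance functions $(\sigma_{xx},\sigma_{xy},\sigma_{yy})$ satisfy the associated Lyapunov ODE, which is exactly~\eqref{defn:functions0} after substituting $\sigma_{yy}(t)=\sigma^2 t$.

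Finally I would transfer to the jamming fraction. The algorithm stops at $\tau_n=\inf\{k:X_k=0\}$, and since each step activates one vertex, $J_n^\star(c,\alpha)=\tau_n/n$. The limit curve has a unique first zero $J^\star(c,\alpha)$, it crosses transversally because $x'(J^\star)=-1+f(J^\star)x(J^\star)=-1$, and (for $\alpha<1$) $J^\star<1/\mu$ by the comparison $x(t)<y(t)=1-\mu t$ on $(0,1/\mu)$. Evaluating $0=X_{\tau_n}/n=x(\tau_n/n)+n^{-1/2}Z^{(n)}_x(\tau_n/n)$, Taylor-expanding $x$ around $J^\star$, and using $J_n^\star\pto J^\star$ together with the equicontinuity of $Z^{(n)}$, yields $\sqrt n(J_n^\star-J^\star)=-Z^{(n)}_x(J^\star)/x'(J^\star)+\oP(1)\dto Z$, a centred Gaussian with variance $\sigma_{xx}(J^\star)/x'(J^\star)^2=\sigma_{xx}(J^\star)=V^\star(c,\alpha)$, the last equality being the clean consequence of $x'(J^\star)=-1$.

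The main obstacle is the rigorous reduction to a Markov chain with the stated increment law: the pre-formed household partition makes activating a uniformly random neutral vertex slightly size-biased toward large households, and one must show this bias, together with the sampling-without-replacement corrections, is $O(1/n)$ and perturbs neither the limiting drift nor the coefficients $\beta,\sigma^2,\rho$; a convenient device is to couple the pre-formed $\CRG$ with a ``households-on-demand'' reformulation. A secondary point is that $f,g,\beta$ blow up as $t\uparrow 1/\mu$, which is harmless once $J^\star<1/\mu$ has been established and all estimates are run on a compact subinterval of $[0,1/\mu)$ containing $J^\star$ in its interior, with the degenerate case $\alpha=1$ (where the graph is a disjoint union of cliques and $J^\star=1/\mu$) treated by a separate and simpler argument and $\sigma_{xx}(1/\mu)$ read as a limit.
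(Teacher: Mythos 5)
Your proposal follows essentially the same route as the paper's proof: the same exploration chain $(X_n,Y_n)$ with households generated on demand, the same drift and conditional variance/covariance computations (yielding the rates $\beta(t)$, $\sigma^2$ and the correlation $\rho(t)$), the martingale functional CLT giving the linear Gaussian SDE for the diffusion-scaled fluctuations, the Lyapunov ODE for $(\sigma_{xx},\sigma_{xy})$ with $\sigma_{yy}(t)=\sigma^2 t$, and the transfer to the jamming time using $x'(J^\star)=-1$. The only differences are cosmetic: the paper invokes a first-passage/hitting-time theorem of Ethier and Kurtz instead of your Taylor-expansion argument, and it obtains $J^\star<1/\mu$ from the explicit solution of the linear ODE rather than your comparison with $y(t)$, so the substance of the argument is the same.
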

%
Fig.~\ref{histogram} confirms that the asymptotic analytical variance given in~\eqref{defn:functions0} and~\eqref{defn:functions} is a sharp approximation for the \CRG model with only 2000 vertices.
\begin{figure}
\begin{subfigure}{.5\textwidth}
  \centering
  \includegraphics[scale=.5]{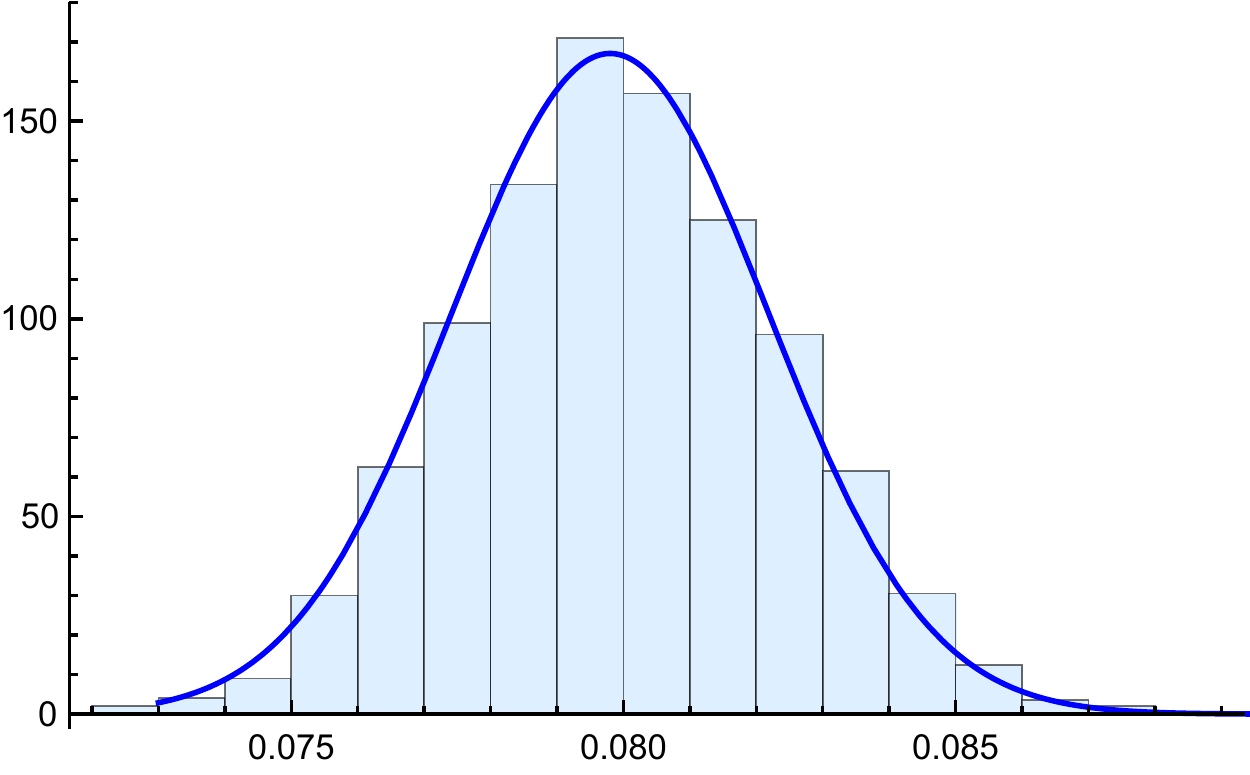}
  \caption{Fitted normal curve}
  \label{histogram}
\end{subfigure}%
\begin{subfigure}{.5\textwidth}
  \centering
  \includegraphics[scale=.5]{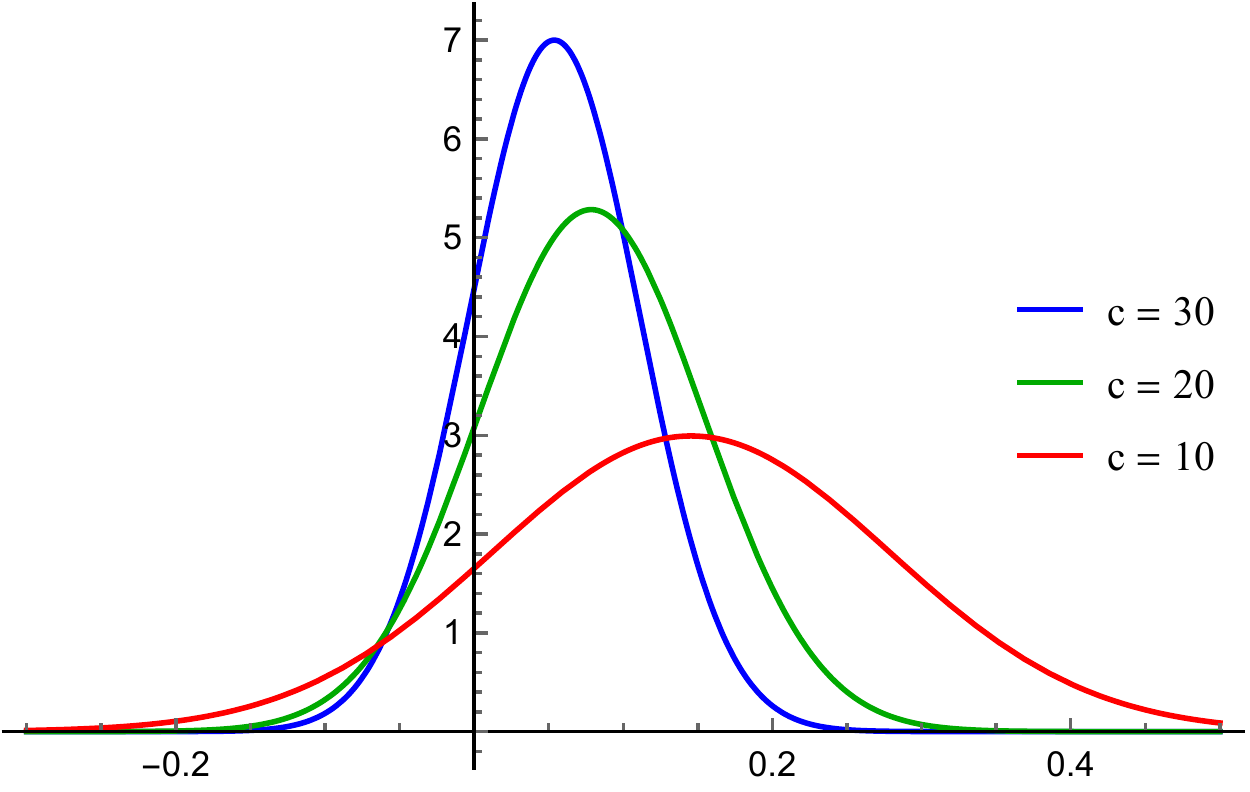}
  \caption{Effect of density on variance}
\label{figguass1}
\end{subfigure}
\caption{(a) Fitted normal curve for 2000 repetitions of the $\CRG(20,\alpha_2)$ model with 1000 vertices. The solid curve represents the normal density with properly scaled theoretical variance $V^\star(c,\alpha_2)$, centered around the sample mean.
(b)  Fitted normal curves for the $\CRG(c,\alpha_2)$ model for increasing $c$ values 10, 20, and 30. As $c$ increases, the curve become more sub-Poissonian.
}
\end{figure}
Table~\ref{tab2} shows numerical values of $V^\star(c,\alpha_d)$
 and compares the analytically obtained values of $J^\star(c,\alpha_d)$ and $V^\star(c,\alpha_d)$, and simulated  mean and variance for the random geometric graph ensemble. The agreement again confirms the appropriateness of the $\CRG(c,\alpha_d)$ model for modeling the continuum $\RSA$. Furthermore, $V^\star(c,\alpha_d)$ serves as an approximation for the value of $V(c,d)$, the asymptotic variance of $J(c,d)$ (suitably rescaled).
\begin{table}
\centering
\begin{tabular}{|C{1cm}|C{1cm}|C{1.75cm}|C{1.75cm}|C{1.75cm}|C{1.75cm}|}
\cline{3-6}
\multicolumn{2}{c|}{} &\multicolumn{2}{c}{\RGG}&\multicolumn{2}{|c|}{\CRG}\\
\hline
\multicolumn{1}{|c|}{$n$} & $c$ & $J_n(c,2)$ & $V_n(c,2)$ & $J^\star(c,\alpha_2)$ & $V^\star(c,\alpha_2)$\\
\hline
200 & 10   &  0.1618    &     0.0166     &       &         \\
500 & 10   &   0.1608   &     0.0158     &   0.1454   &    0.0178       \\
1000 & 10  & 0.1623  & 0.0155  &      &        \\
\hline
200 & 20  &  0.0887    &     0.0062     &      &         \\
500 & 20  &   0.0892  &    0.0068      &   0.0786   &    0.0057 \\
1000 & 20  &  0.0890    &    0.0067     &      &         \\
\hline
200 & 30  &  0.0619    &     0.0039     &      &         \\
500 & 30  &   0.0620   &    0.0041      &   0.0538   &    0.0032 \\
1000 & 30  &  0.0615    &     0.0043     &      &         \\
\hline
\end{tabular}%
\caption{Comparison between the observed mean and scaled variance $n\mathrm{Var}(J_n(c,2))$ for the $\RGG$ model, and the theoretical mean and variance from Theorem~\ref{th:jam-diffusion} in dimension 2.
The sample means and variances for the $\RGG$ model are calculated over 150 samples.}
\label{tab2}
\end{table}
Fig.~\ref{figguass1} shows the density function of the random variable based on the Gaussian approximation in Theorem~\ref{th:jam-diffusion}. 
We observe that both the mean and the fluctuations around the mean decrease with $c$. 
Indeed, the variance-to-mean ratio has been typically observed to be smaller than one for \RSA in the continuum, and it is generally believed that the jamming fractions are typically of sub-Poissonian nature with fluctuations that are not as large as for a Poisson distribution; 
see for instance the Mandel Q parameter in quantum physics \cite{SJK15}. 
So, while a closed-form expression remains out of reach (as for the Mandel Q parameter \cite{SJK15}), our solvable model gives a way to  describe approximately the variance-to-mean ratio as $V^\star(c,\alpha_d)/J^\star(c,\alpha_d)$.

\section{Proof of Theorem~\texorpdfstring{\ref{th:fluid}}{3.1}}\label{sec:proof}
In this section we analyze several asymptotic properties of \RSA on the $\CRG(c,\alpha)$ model.
In particular, we will prove Theorem~\ref{th:fluid}.
We first introduce an algorithm that sequentially activates the vertices while obeying the hard-core exclusion constraint, and then analyze the exploration algorithm (see~\cite{BJS15, DLM16, BJL15} for similar analyses in various other contexts).
The idea is to keep track of the number of vertices that are not neighbors of already actives vertices (termed unexplored vertices), so that when this number becomes zero, no vertex can be activated further.
The number of unexplored vertices can then be decomposed into a drift part which converges to a deterministic function and a fluctuation or martingale part which becomes asymptotically negligible in the mean-filed case (Theorem~\ref{th:fluid}) but gives  rise to the a system of SDEs with variance \eqref{defn:functions0}.
The proof crucially relies on the Functional Laws of Large Numbers (FLLN) and the Functional Central Limit Theorem (FCLT).
The key challenge here is that the process that keeps track of the number of unexplored vertices while the exploration algorithm is running does not yield a Markov process, so we have to introduce another process to make the system Markovian and analyze this two-dimensional system. 

For each vertex, the neighboring vertices inside and outside its own household will be referred to as `household  neighbors' and `distant neighbors', respectively.
If $H$ denotes the size of the households, then $H\sim 1+\mathrm{Poisson}(\alpha c)$. Therefore,  $\expt{H}=1+\alpha c$, and $\mathrm{Var}(H)=\alpha c$.
Furthermore, any two vertices belonging to two different households are
connected by an edge with probability $p_n=(1-\alpha)c/n$, so the number of distant neighbors is a Bin$(n-H-1,p_n)$ random variable, Poisson$((1-\alpha)c)$ in the large $n$ limit.
As mentioned earlier, the total number of neighbors, is then asymptotically given by a Poisson$(c)$ random variable.
In this section we fix $c>0$ and $\alpha\in[0,1]$, and simply write $J^\star_n$ and $J^\star$ for $J^\star_n(c,\alpha)$ and $J^\star(c,\alpha)$ respectively.\\

\noindent
{\bf Notation.}
We will use boldfaced letters to denote stochastic processes and vectors.
A sequence of random variables $\{X_n\}_{n\geq 1}$ is said to be $\OP(f(n))$, or $\oP(f(n))$, for some function $f:\R\to\R_+$, if the sequence of scaled random variables $\{X_n/f(n)\}_{n\geq 1}$ is  tight, or converges to zero in probability, respectively.
We denote by $D_E[0,\infty)$ the set of all \emph{c\`adl\`ag} (right continuous left limit exists) functions from $[0,\infty)$ to a complete, separable metric space $E$, endowed with the Skorohod $J_1$ topology, and
by `$\dto$' and `$\pto$', convergence in distribution and in probability, respectively. In particular, if the sample paths of a stochastic process $\XX$ are continuous, we write
$\XX_n=\{X_n(t)\}_{t\geq 0}\xrightarrow{\sss d} \XX=\{X(t)\}_{t\geq 0}$, if for any $T\geq 0$,
\begin{equation}
\sup_{t\in [0,T]}|X_n(t)-X(t)|\pto 0\quad\text{as}\quad n\to\infty.
\end{equation}

\subsection{The exploration algorithm}\label{sec:algo}
Instead of fixing a particular realization of the random graph and then studying \RSA on that given graph, we introduce an algorithm which sequentially \emph{activates} the vertices one-by-one, \emph{explores} the neighborhood of the activated vertices, and simultaneously builds the random graph topology on the activated and  explored vertices.
The joint distribution of the random graph and active vertices obtained this way is same as those obtained by first fixing the random graph and then studying \RSA.
The idea of exploring in the above fashion simplifies the whole analysis, since the evolution of the system can be described recursively in terms of the previous states, as  described below in detail. 

Observe that during the process of sequential activation, until the jamming state is reached, the vertices can be
in either of three states: active, blocked, and unexplored (i.e.~vertices with future potential activation).
Furthermore, there can be two types of blocked vertices: (i) blocked due to activation of some household neighbor,
or
(ii) none of the household neighbors is active, but there is an active distant neighbor.
Therefore, at each time $t\geq 0$, categorize the vertices into four sets:
\begin{itemize}
\item A$(t)$: set of all vertices active.
\item U$(t)$: set of all vertices that are not active and that have not been blocked by any vertex in~A$(t)$.
\item BH$(t)$: set of all vertices that belong to a household of some vertex in A$(t)$.
\item BO$(t)$: set of all vertices that do not belong to a household yet, but are blocked due to connections with some vertex in A$(t)$ as a distant neighbor.
\end{itemize}
Note that $\BH(t)\cup\BO(t)$ constitutes the set of all blocked vertices at time $t$, and $\BH(t)\cap\BO(t)=\emptyset$.
Initially, all vertices are unexplored, so that U$(0)= V(G)$, the set of all $n$ vertices.
At time step $t$,
one vertex $v$ is selected from U$(t-1)$ uniformly at random and
is transferred to A$(t)$, i.e., one unexplored vertex becomes active.

We now explore the neighbors of $v$, which can be of two types: the household neighbors, and the distant neighbors.
Further observe that $v$ can have its household neighbors only from the set
 $\U(t-1)\cup\BO(t-1)\setminus \{v\}$, since each vertex in $\BH(t-1)$ already belongs
 to some household.
 Define
 $$H(t)\sim \min\Big\{\mathrm{Poisson}(\alpha c), |\U(t-1)\cup\BO(t-1)\setminus \{v\}|\Big\},$$
 i.e., draw a Poisson$(\alpha c)$ random variable independently of any other process, and if it is smaller than $|\U(t-1)\cup\BO(t-1)\setminus \{v\}|$, then take it to be the value of $H-1$, and otherwise set $H(t)=|\U(t-1)\cup\BO(t-1)\setminus \{v\}|$.
Select $H(t)$ vertices $\{u_1,u_2,\ldots,u_{H}\}$ at random from all vertices in  $U(t-1)\cup \BO(t-1)\setminus \{v\}$.
These $H(t)$ vertices together form the household containing $v$, and are moved to $\BH(t)$, irrespective of the set they are selected from.
To explore the distant neighbors, select one by one, all the vertices in $\U(t-1)\cup \BO(t-1)\cup \BH(t-1)\setminus\{v,u_1,\ldots,u_{H}\}$,
and for every such selected vertex $\bar{u}$, put an edge between $\bar{u}$ and $v$ with probability $p_n$.
Denote the newly created distant neighbors that belonged to $\U(t-1)$
by $\{\bar{u}_1,\ldots,\bar{u}_d\}$, and move these vertices to $\BO(t)$.
In summary, the exploration algorithm yields the following recursion relations:
\begin{align*}
\A(t) &= \A(t-1)\cup\{v\},\\
\U(t) &= \U(t-1)\setminus\{v, u_1,u_2,\ldots,u_{H},\bar{u}_1,\ldots,\bar{u}_d\},\\
\BH(t)  &= \BH(t-1)\cup \{u_1,u_2,\ldots,u_{H}\},\\
\BO(t)  &= \BO(t-1)\cup \{\bar{u}_1,\ldots,\bar{u}_d\}.
\end{align*}
The algorithm terminates when there is no vertex left in the set U$(t)$ (implying that all vertices are either active or blocked), and outputs the cardinality of A$(t)$ as the number of active vertices in the jammed state.
\subsection{State description and martingale decomposition.}\label{sec:mart-decompose}
Denote for $t\geq 0$,
\begin{align*}
X_n(t):=|\U(t)|, \quad
Y_n(t):=|\U(t)\cup \BO(t)|.
\end{align*}
Observe that $\{(X_n(t),Y_n(t))\}_{t\geq 0}$ is a Markov chain.
At each time step, one new vertex becomes active, so that 
$|\A(t)|=t$, and
the total number of vertices in the jammed state is given by the time step when $X_n(t)$ hits zero, i.e.,
the time step when the exploration algorithm terminates.
Let us now introduce the shorthand notation $\mu = \E[H] = 1+\alpha c$, $\sigma^2 = \var{H}= \alpha c$ and $\lambda = (1-\alpha) c$.

\paragraph*{\bf Dynamics of $X_n$.}
First we make the following observations:
\begin{itemize}
\item $X_n(t)$ decreases by one, when a new vertex $v$ becomes active.
\item The household neighbors of $v$ are selected from $Y_n(t-1)$ vertices, and $X_n(t)$ decreases by an amount of the number of such vertices which are in $\U(t-1)$.
\item $X_n(t)$ decreases by the number of distant neighbors of the newly active vertex that belong to $\U(t-1)$ (since they are transferred to $\BO(t)$).
\end{itemize}
Thus,
\begin{equation}
 X_n(t+1)=X_n(t)-\xi_n(t+1)\quad\text{and}\quad X_n(0)=n
\end{equation}
with
\begin{equation}\label{defn:xi}
 \xi_n(t+1)=1+\eta_1(t+1)+\eta_2(t+1),
\end{equation}
where conditionally on $(X_n(t),Y_n(t))$,
\begin{equation}\label{defn:eta1}
\eta_1(t+1)\sim \text{Hypergeometric}(X_n(t), Y_n(t), H(t)),
\end{equation}
i.e.,~$\eta_1(t+1)$ has a Hypergeometric distribution with favorable outcomes $X_n(t)$, population size $Y_n(t)$, and sample size $H(t)$.
Further, conditionally on $(X_n(t),Y_n(t),\eta_1(t+1))$,
\begin{equation}\label{defn:eta2}
\eta_2(t+1) \sim \mathrm{Bin}\Big(X_n(t)-1-\eta_1(t+1),\ \frac{\lambda}{n}\Big).
\end{equation}
Therefore, the drift function of the $\XX_n$ process satisfies
\begin{equation}\label{eq:drift-X}
\begin{split}
  \expt{\xi_n(t+1)| X_n(t), Y_n(t)}&=1+\frac{X_n(t)(\mu-1)}{Y_n(t)}+\left(X_n(t)-1-\frac{X_n(t)(\mu-1)}{Y_n(t)}\right)\frac{\lambda}{n}\\
 &= 1+ \frac{X_n(t)(\mu-1)}{Y_n(t)}+ \frac{\lambda X_n(t)}{n} +\OP(n^{-1}),
\end{split}
\end{equation}
where, in the last step, we have used the fact that $X_n(t)\leq Y_n(t)$.

\paragraph*{\bf Dynamics of $Y_n$.}
The value of $Y_n$ does not change due to the creation of distant neighbors.
At time $t$, it can only decrease due to an activation
of a vertex $v$ (since it is moved to $\A(t)$), and the formation of a household, since all the vertices that make the
household of $v$, were in $\U(t-1)\cup\BO(t-1)$, and are moved to $\BH(t)$.
Thus, at each time step, $Y_n(t)$ decreases on average by an amount $\mu=1+\alpha c$, 
the expected household size, except at the final step when the residual number of vertices can be smaller than the household size.
But this will not affect our asymptotic results in any way, and we will ignore it.
Hence,
\begin{equation}
 Y_n(t+1)=Y_n(t)-\zeta_n(t+1) \quad\text{and}\quad Y_n(0)=n,
\end{equation}where
\begin{equation}
 \expt{\zeta_n(t+1)|X_n(t), Y_n(t)}=\mu.
\end{equation}

\paragraph*{\bf Martingale decomposition.}
Using the Doob-Meyer decomposition \cite[Theorem 4.10]{KS91} of~$\bld{X}_n$, \eqref{eq:drift-X} yields the following martingale decomposition
\begin{equation*}
\begin{split}
 X_n(t)
 &=n-\sum_{i=1}^t\xi_n(i)=n+M_{n}^{\sss X}(t)-t-\sum_{i=1}^t\bigg[\frac{X_n(i-1)(\mu-1)}{Y_n(i-1)}
+ \frac{\lambda X_n(i-1)}{n} +\OP(n^{-1})\bigg],
 \end{split}
\end{equation*}
where $\bld{M}_n^{\sss X}=\{M_n^{\sss X}(t)\}_{t\geq 1}$ is a square-integrable martingale with respect to the usual filtration generated by the exploration algorithm.
Let us now define the scaled processes
$$x_n(t):=\frac{X_n(\floor{nt})}{n}\quad\text{and}\quad y_n(t):=\frac{Y_n(\floor{nt})}{n}.$$
Also define
\begin{equation}\label{eq:delta}
\delta(x,y):=(\mu-1) \frac{x}{y}+\lambda x,\quad\text{for}\quad 0\leq x\leq y,\ y>0.
\end{equation}
Thus, we can write
\begin{equation}\label{eq:mart-decompose}
 \begin{split}
  x_n(t)&=1+\frac{M_n^{\sss X}(\floor{nt})}{n}-\frac{\floor{nt}}{n}
  -\frac{1}{n}\sum_{i=1}^{\floor{nt}}\delta\bigg(\frac{X_n(i-1)}{n},\frac{Y_n(i-1)}{n}\bigg)+\OP(n^{-1})\\
  &=1+\frac{M_n^{\sss X}(\floor{nt})}{n}-t-\int_0^t\delta(x_n(s),y_n(s))\dif s
  +\OP(n^{-1}).
 \end{split}
\end{equation}
Similar arguments yield
\begin{eq}\label{eq:repr-yn}
 y_n(t)=1+\frac{M_n^{\sss Y}(\floor{nt})}{n}-\mu t +\OP(n^{-1}),
\end{eq}
where $\bld{M}_n^{\sss Y}  =\{M_n^{\sss Y}(t)\}_{t\geq 1}$ is a square-integrable martingale with respect to a suitable filtration. We write $\bld{x}_n$ and $\bld{y}_n$ to denote the processes $(x_n(t))_{t\geq 0}$ an $(y_n(t))_{t\geq 0}$ respectively.

\subsection{Quadratic variation and covariation}\label{sec:qvqcv}

To investigate the scaling behavior of the martingales, we will now compute the respective quadratic variation and covariation terms.
For convenience in notation, denote by $\PP_t, \E_t$, $\mathrm{Var}_t$, $\mathrm{Cov}_t$, the conditional probability, expectation, variance and covariance, respectively,  conditioned on $(X_n(t), Y_n(t))$.
Notice that, for the martingales $\bld{M}_n^{\sss X}$ and $\bld{M}_n^{\sss Y}$, the quadratic variation and covariation terms are given by
\begin{eq}\label{defn:var-covar-mart}
 \langle M_n^{\sss X} \rangle (\floor{nt})&= \sum_{i=1}^{\floor{nt}}\mathrm{Var}_{i-1}(\xi_n(i)),\\
 \quad  \langle M_n^{\sss Y} \rangle (\floor{nt}) &= \sum_{i=1}^{\floor{nt}}\mathrm{Var}_{i-1}(\zeta_n(i)),\\
  \langle M_n^{\sss X}, M_n^{\sss Y} \rangle (\floor{nt})& = \sum_{i=1}^{\floor{nt}}\mathrm{Cov}_{i-1}(\zeta_n(i),\xi_n(i)).
\end{eq}
  Thus,  the quantities of interest are $\mathrm{Var}_t(\xi_n(t+1))$, $\mathrm{Var}_t(\zeta_n(t+1))$ and $\mathrm{Cov}_t(\xi_n(t+1),\zeta_n(t+1))$, which we derive in the three successive claims.

\begin{claim}
For any $t\geq 1$,
  \begin{equation}\label{var-zeta}
   \mathrm{Var}_t(\zeta_n(t+1)) = \sigma^2.
  \end{equation}
\end{claim}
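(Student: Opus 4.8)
The plan is to trace, through one step of the exploration algorithm, exactly which vertices leave the set $\U(t)\cup\BO(t)$ whose cardinality is $Y_n(t)$, and then to observe that this count is a shifted copy of the household-size variable, whose conditional law does not depend on the current state $(X_n(t),Y_n(t))$.

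First I would read off from the recursions for $\U$, $\BO$, $\BH$ in Section~\ref{sec:algo} how $Y_n=|\U\cup\BO|$ changes over the step from time $t$ to time $t+1$. The newly activated vertex $v$ moves from $\U$ to $\A$, contributing a decrease of $1$ to $Y_n$. The $H(t+1)$ household neighbours $u_1,\dots,u_{H}$ are drawn from $\U(t)\cup\BO(t)\setminus\{v\}$ and placed into $\BH(t+1)$, contributing a further decrease of $H(t+1)$. The distant neighbours $\bar u_1,\dots,\bar u_d$ merely migrate from $\U(t)$ to $\BO(t+1)$, i.e.\ they stay inside the union $\U\cup\BO$ and so leave $Y_n$ unchanged; and nothing re-enters $\U\cup\BO$ from $\A$ or $\BH$. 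Hence $\zeta_n(t+1)=Y_n(t)-Y_n(t+1)=1+H(t+1)$, which is precisely the household size $H\disteq 1+\mathrm{Poisson}(\alpha c)$ associated with $v$.

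Next I would invoke the construction of $H(t+1)$: it is obtained by drawing a $\mathrm{Poisson}(\alpha c)$ variate \emph{independently of everything explored so far}, and only truncating it on the (rare) event that it exceeds $|\U(t)\cup\BO(t)\setminus\{v\}|=Y_n(t)-1$. Ignoring that truncation — which, as noted just above the claim, can occur only at the terminal step and is irrelevant to the asymptotics — we have $H(t+1)\sim\mathrm{Poisson}(\alpha c)$ independent of the $\sigma$-field generated by $(X_n(t),Y_n(t))$, so that
\[
\mathrm{Var}_t(\zeta_n(t+1))=\mathrm{Var}_t\big(1+H(t+1)\big)=\var{H}=\alpha c=\sigma^2 .
\]
There is no genuine obstacle here: the only points requiring care are that $\zeta_n(t+1)$ must be identified with the \emph{full} household size rather than with the number $H(t+1)$ of household neighbours alone (hence the $1+$), and that the relevant conditional variance is taken against $(X_n(t),Y_n(t))$, with respect to which the fresh Poisson draw is independent; the boundary truncation is the sole thing one formally sweeps aside, exactly as flagged in the surrounding text.
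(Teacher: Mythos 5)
Your proposal is correct and follows essentially the same route as the paper: the paper's one-line proof simply observes that $\zeta_n(t+1)$ is (up to the ignored terminal-step truncation) the household size $H$, drawn independently of the current state, so its conditional variance is $\var{H}=\sigma^2$. Your more detailed bookkeeping of which vertices leave $\mathrm{U}(t)\cup\mathrm{BO}(t)$ merely spells out what the paper already established in the ``Dynamics of $Y_n$'' paragraph preceding the claim.
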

 \begin{proof}
 The proof is immediate by observing that the random variable denoting the household size has variance~$\sigma^2$.
 \end{proof}

\begin{claim}
For any $t\geq 1$,
\begin{eq}\label{var-xi}
 \mathrm{Var}_t\left(\xi_n(t+1)\right) = \frac{X_n(t)(\mu-1)}{Y_n(t)}+\frac{\lambda X_n(t)}{n}+\OP(n^{-1}).
\end{eq}
\end{claim}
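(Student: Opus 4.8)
The plan is to write $\xi_n(t+1) = 1 + \eta_1 + \eta_2$ with $\eta_i := \eta_i(t+1)$ as in \eqref{defn:eta1}--\eqref{defn:eta2}, and to evaluate $\mathrm{Var}_t(\xi_n(t+1)) = \mathrm{Var}_t(\eta_1) + 2\,\mathrm{Cov}_t(\eta_1,\eta_2) + \mathrm{Var}_t(\eta_2)$ term by term via the law of total variance --- conditioning $\eta_1$ on the number $H$ of fresh household neighbours (a $\mathrm{Poisson}(\alpha c)$ variable truncated at the size of the available set, so that $\E_t[H] = \mu-1$ and $\mathrm{Var}_t(H) = \sigma^2 = \mu-1$ up to an exponentially small correction), and conditioning $\eta_2$ on $\eta_1$; throughout, abbreviate $X := X_n(t)$, $Y := Y_n(t)$. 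The key mechanism is that the Poisson identity $\sigma^2 = \mu - 1$ will force the order-$1$ part of $\mathrm{Var}_t(\eta_1)$ to collapse to $(\mu-1)X/Y$.

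First I would dispose of the two small terms. Since $\E_t[\eta_2\mid\eta_1] = (X-1-\eta_1)\lambda/n$ is affine in $\eta_1$, and $\eta_1$ is measurable with respect to the conditioning that governs $\eta_2$, one gets $\mathrm{Cov}_t(\eta_1,\eta_2) = \mathrm{Cov}_t(\eta_1,\E_t[\eta_2\mid\eta_1]) = -(\lambda/n)\,\mathrm{Var}_t(\eta_1)$; because $\eta_1 \le H$ and $H$ is stochastically dominated by a $\mathrm{Poisson}(\alpha c)$ variable, $\mathrm{Var}_t(\eta_1) = O(1)$ deterministically, so $2\,\mathrm{Cov}_t(\eta_1,\eta_2) = \OP(n^{-1})$. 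Likewise, the law of total variance over $\eta_1$ gives $\mathrm{Var}_t(\eta_2) = \E_t[(X-1-\eta_1)(\lambda/n)(1-\lambda/n)] + (\lambda/n)^2\mathrm{Var}_t(\eta_1) = \lambda X/n + \OP(n^{-1})$, every correction carrying an extra factor $n^{-1}$ or an $\E_t[\eta_1] = O(1)$.

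The substantive computation is $\mathrm{Var}_t(\eta_1)$. Conditionally on $(X, Y, H)$, $\eta_1 \sim \mathrm{Hypergeometric}(X-1, Y-1, H)$ --- up to replacing $X-1, Y-1$ by $X, Y$, which perturbs the relevant moments by $O(1/Y)$ --- so the law of total variance over $H$ yields
\[
\mathrm{Var}_t(\eta_1) = \frac{X(Y-X)}{Y^2}\cdot\frac{Y\,\E_t[H] - \E_t[H^2]}{Y-1} + \frac{X^2}{Y^2}\,\mathrm{Var}_t(H).
\]
Using $\E_t[H] = \mu-1$ and $\mathrm{Var}_t(H) = \sigma^2 = \mu-1$ up to $o(n^{-1})$ (the truncation in the definition of $H$ costs only a Poisson-tail error, which is $o(n^{-1})$ whenever $Y = \Theta(n)$), the numerator of the middle fraction is $(\mu-1)(Y-1) - (\mu-1)^2$, so the first term equals $\frac{X(Y-X)}{Y^2}\big[(\mu-1) - (\mu-1)^2/(Y-1)\big]$. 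Adding the second term and invoking $\sigma^2 = \mu-1$, the $(\mu-1)$-parts combine as $\frac{\mu-1}{Y^2}\big(X(Y-X) + X^2\big) = (\mu-1)X/Y$, leaving a residual $-\,\frac{X(Y-X)(\mu-1)^2}{Y^2(Y-1)}$, whose absolute value is at most $(\mu-1)^2/(Y-1)$. Collecting the three pieces gives $\mathrm{Var}_t(\xi_n(t+1)) = (\mu-1)X/Y + \lambda X/n + \OP(n^{-1})$, as claimed.

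The main obstacle --- really the only one --- is to check that all the discarded pieces are uniformly $\OP(n^{-1})$ in the way the claim is used. Three points need care: the Poisson truncation in $H$ (its effect is exponentially small in $Y$, hence $o(n^{-1})$ once $Y = \Theta(n)$); the $O(1)$ discrepancies between the sampling set $\U(t)\cup\BO(t)\setminus\{v\}$ and the population counts $X, Y$ (they shift the hypergeometric moments by $O(1/Y)$); and, above all, the residual term $\frac{X(Y-X)(\mu-1)^2}{Y^2(Y-1)}$ being genuinely of order $n^{-1}$ --- which holds because the claim is invoked at a fixed time step $t$ (where $Y_n(t) = n - O(1)$) and, when it later feeds the quadratic-variation sums of Section~\ref{sec:qvqcv}, only over steps $\floor{ns}$ with $s$ below $1/\mu$, where $Y_n(\floor{ns}) = (1-\mu s)n + o(n)$ stays comparable to $n$. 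As a sanity check, comparing with \eqref{eq:drift-X} this says $\mathrm{Var}_t(\xi_n(t+1)) = \E_t[\xi_n(t+1)] - 1 + \OP(n^{-1})$, the ``locally Poisson'' behaviour one expects from the thinning structure of the exploration.
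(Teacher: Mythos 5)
Your proof is correct and follows essentially the same route as the paper: decompose $\xi_n(t+1)=1+\eta_1+\eta_2$, compute the $\eta_1$-variance by conditioning on the household size and exploiting the Poisson identity $\sigma^2=\mu-1$ to collapse the $X_n^2/Y_n^2$ terms to $(\mu-1)X_n(t)/Y_n(t)$, and show that $\mathrm{Var}_t(\eta_2)$ and the cross term contribute $\lambda X_n(t)/n$ and $\OP(n^{-1})$. The differences are only organizational --- you use the law of total variance and the exact identity $\mathrm{Cov}_t(\eta_1,\eta_2)=-(\lambda/n)\mathrm{Var}_t(\eta_1)$ where the paper computes second factorial moments --- and your explicit remarks on the truncation of $H$ and on treating $1/Y_n(t)$ as $\OP(n^{-1})$ are at least as careful as the paper's implicit handling of the same points.
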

\begin{proof}
From the definition of $\xi_n$ in \eqref{defn:xi}, the computation of $\mathrm{Var}_t(\xi_n(t+1))$ requires computation of $\mathrm{Var}_t(\eta_1(t+1))$, $\mathrm{Cov}_t(\eta_1(t+1),\eta_2(t+1))$ and $\mathrm{Var}_t(\eta_2(t+1))$. Since $\eta_1$ follows a Hypergeometric distribution, 
\begin{equation}
\begin{split}
&\E_t\left(\eta_1(t+1)(\eta_1(t+1)-1)\vert H\right)
=\frac{X_n(t)(X_n(t)-1)(H-1)(H-2)}{Y_n(t)(Y_n(t)-1)}
\end{split}
\end{equation}
and
\begin{eq}\label{var-eta1}
 \mathrm{Var}_t\left( \eta_1(t+1) \right)&=\frac{X_n(t)(X_n(t)-1)\expt{(H-1)(H-2)}}{Y_n(t)(Y_n(t)-1)}+\E_t\left(\eta_1(t+1)\right)-\E_t^2\left(\eta_1(t+1)\right)\\
 & = \frac{X_n^2(t)}{Y_n^2(t)}(\sigma^2+\mu^2-3\mu+2)+\frac{X_n(t)}{Y_n(t)}(\mu-1) - \frac{X_n^2(t)}{Y_n^2(t)}(\mu-1)^2+\OP(n^{-1})\\
 & =  \frac{X_n^2(t)}{Y_n^2(t)}(\sigma^2-\mu+1)+\frac{X_n(t)}{Y_n(t)}(\mu-1)+\OP(n^{-1})\\
 & = \frac{X_n(t)}{Y_n(t)}(\mu-1)+\OP(n^{-1}),
\end{eq}
since $\sigma^2=\mu-1=\alpha c.$
Also, we have
\begin{eq}
 &\E_t\left(\eta_2(t+1)(\eta_2(t+1)-1)\right)\\
 &=\big[ (X_n(t)-1)(X_n(t)-2)-\E_t\left(\eta_1(t+1)\right) \left[ 2X_n(t)-3\right]
+\E_t\left(\eta_1^2(t+1)\right)\big]\bigg(\frac{\lambda}{n}\bigg)^2\\
 & = \frac{\lambda^2 X_n^2(t)}{n^2} + \OP(n^{-1})
\end{eq}
and therefore
\begin{eq}\label{var-eta2}
\mathrm{Var}_t(\eta_2(t+1)) = \frac{\lambda X_n(t)}{n} +\OP(n^{-1}).
\end{eq}
Further,
\begin{eq}
 \E_t\left(\eta_1(t+1)\eta_2(t+1)\right)
 &=\E_t\left(\eta_1(t+1)(X_n(t)-1-\eta_1(t+1))\frac{\lambda}{n}\right)\\
 &=\frac{\lambda}{n}\left[ (X_n(t)-1)\E_t(\eta_1(t+1))-\E_t(\eta_1^2(t+1))\right]\\
  & = \frac{\lambda X_n(t)}{n}\frac{X_n(t)(\mu-1)}{Y_n(t)} +\OP(n^{-1}).
\end{eq}
Now, from \eqref{defn:eta1}, \eqref{defn:eta2}, 
\begin{equation*}
\E_t(\eta_1(t+1)) = \frac{\lambda X_n(t)}{n}+\OP(n^{-1}) \quad \text{and}\quad  \E_t(\eta_2(t+1)) = \frac{X_n(t)(\mu-1)}{Y_n(t)}+\OP(n^{-1}),
\end{equation*}which implies that 
\begin{eq}\label{covar-eta12}
\mathrm{Cov}_t(\eta_1(t+1),\eta_2(t+1)) = \OP(n^{-1}).
\end{eq}
Combining \eqref{var-eta1}, \eqref{var-eta2} and \eqref{covar-eta12}, gives~\eqref{var-xi}.
\end{proof}

\begin{claim}
For any $t\geq 1$,
\begin{eq}\label{covar-zeta-xi}
\mathrm{Cov}_t\left( \zeta_n(t+1),\xi_n(t+1)\right) = \frac{X_n(t)}{Y_n(t)}\sigma^2+\OP(n^{-1}).
\end{eq}
\end{claim}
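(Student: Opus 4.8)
The plan is to compute $\mathrm{Cov}_t(\zeta_n(t+1),\xi_n(t+1))$ directly from the definitions, reusing the conditional moment expansions already established in the proof of Claim~2. Recall that $\xi_n(t+1)=1+\eta_1(t+1)+\eta_2(t+1)$, so by bilinearity of covariance (the constant $1$ drops out) I would write
\begin{equation*}
\mathrm{Cov}_t(\zeta_n(t+1),\xi_n(t+1))=\mathrm{Cov}_t(\zeta_n(t+1),\eta_1(t+1))+\mathrm{Cov}_t(\zeta_n(t+1),\eta_2(t+1)).
\end{equation*}
The first step is to argue that the second term is negligible: $\eta_2(t+1)$ is the number of distant neighbors in $\U(t)$, obtained by independent $\mathrm{Bin}(\cdot,\lambda/n)$ sampling, and conditionally on $(X_n(t),Y_n(t))$ this sampling is independent of the household-size variable $H(t)$ that drives $\zeta_n(t+1)$; hence the covariance is at most of order $\E_t(\eta_2(t+1))=\OP(n^{-1})$ times a bounded quantity, i.e.\ $\OP(n^{-1})$. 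Actually a cleaner route: $\zeta_n$ is $\sigma(H(t))$-measurable and $\eta_2(t+1)$ has conditional mean $O(n^{-1})$, so $\mathrm{Cov}_t(\zeta_n,\eta_2)=\OP(n^{-1})$ follows from Cauchy–Schwarz or from conditioning on $H(t)$.

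The main content is therefore $\mathrm{Cov}_t(\zeta_n(t+1),\eta_1(t+1))$. Here I would condition on $H(t)$ first. Note that $\zeta_n(t+1)=H(t)$ up to the negligible boundary correction (activation of $v$ removes one vertex plus the $H(t)-1$ household members from $Y_n$, so $\zeta_n(t+1)=H(t)$ with $H(t)\sim 1+\mathrm{Poisson}(\alpha c)$ truncated), while conditionally on $H(t)$ the variable $\eta_1(t+1)$ is $\mathrm{Hypergeometric}(X_n(t),Y_n(t),H(t)-1)$ with conditional mean $(H(t)-1)X_n(t)/Y_n(t)$. Using the law of total covariance,
\begin{equation*}
\mathrm{Cov}_t(\zeta_n,\eta_1)=\E_t\big[\mathrm{Cov}_t(\zeta_n,\eta_1\mid H(t))\big]+\mathrm{Cov}_t\big(\E_t[\zeta_n\mid H(t)],\,\E_t[\eta_1\mid H(t)]\big),
\end{equation*}
the first term vanishes (given $H(t)$, $\zeta_n$ is deterministic up to $O(1)$ boundary effects, hence has zero conditional covariance with anything), and the second term is $\mathrm{Cov}_t\big(H(t),\,(H(t)-1)\tfrac{X_n(t)}{Y_n(t)}\big)=\tfrac{X_n(t)}{Y_n(t)}\,\mathrm{Var}(H(t))=\tfrac{X_n(t)}{Y_n(t)}\sigma^2$ up to $\OP(n^{-1})$, using $\mathrm{Var}(H)=\sigma^2=\alpha c$ and the truncation being irrelevant in the limit. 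Combining the two pieces gives exactly \eqref{covar-zeta-xi}.

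The main obstacle I anticipate is bookkeeping the error terms carefully: one must justify that the truncation in the definition of $H(t)$ (the $\min$ with $|\U(t-1)\cup\BO(t-1)\setminus\{v\}|$) and the boundary correction in $\zeta_n$ only contribute $\OP(n^{-1})$, which holds because these corrections are active only when $Y_n(t)=O(1)$, and that conditioning structure (first on $(X_n(t),Y_n(t))$, then on $H(t)$, then on $\eta_1(t+1)$) is applied consistently so that the $\mathrm{Bin}(X_n(t)-1-\eta_1(t+1),\lambda/n)$ step really is independent of $H(t)$ given the earlier conditioning. None of this is deep, but it is the only place where care is needed; the algebra itself is a short computation once the law of total covariance is set up.
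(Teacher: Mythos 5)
Your decomposition and your treatment of the main term coincide with the paper's proof: the paper also splits $\xi_n(t+1)=1+\eta_1(t+1)+\eta_2(t+1)$, computes $\E_t\left(\zeta_n(t+1)\eta_1(t+1)\right)$ by conditioning on the household size (equivalently, on $\zeta_n(t+1)$), and obtains $\mathrm{Cov}_t(\zeta_n(t+1),\eta_1(t+1))=\frac{X_n(t)}{Y_n(t)}\sigma^2$ exactly as your law-of-total-covariance computation does. So for the leading term your argument is correct and essentially identical.

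The gap is in your justification that $\mathrm{Cov}_t(\zeta_n(t+1),\eta_2(t+1))=\OP(n^{-1})$. Three of the reasons you offer do not hold as stated: (i) conditionally on $(X_n(t),Y_n(t))$ the binomial thinning is \emph{not} independent of $H(t)$, because its number of trials $X_n(t)-1-\eta_1(t+1)$ depends on $\eta_1(t+1)$, which is correlated with $H(t)$; (ii) $\E_t(\eta_2(t+1))$ is not $\OP(n^{-1})$ but $\lambda X_n(t)/n+\OP(n^{-1})$, which is of constant order when $X_n(t)\asymp n$; (iii) Cauchy--Schwarz only yields $|\mathrm{Cov}_t(\zeta_n,\eta_2)|\leq\sigma\sqrt{\lambda X_n(t)/n}+\oP(1)$, i.e.\ an $\OP(1)$ bound, not $\OP(n^{-1})$. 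The correct (and short) argument --- the one the paper uses --- is to condition on $(\zeta_n(t+1),\eta_1(t+1))$: since $\E_t\left(\eta_2(t+1)\given \zeta_n(t+1),\eta_1(t+1)\right)=(X_n(t)-1-\eta_1(t+1))\lambda/n$, one gets $\mathrm{Cov}_t(\zeta_n(t+1),\eta_2(t+1))=-\frac{\lambda}{n}\,\mathrm{Cov}_t(\zeta_n(t+1),\eta_1(t+1))=-\frac{\lambda}{n}\frac{X_n(t)}{Y_n(t)}\sigma^2=\OP(n^{-1})$; the dependence is genuine but enters only through the factor $\lambda/n$. With that replacement (and your own observation that the truncation of $H(t)$ and the final-step boundary effect are asymptotically negligible), your proof is complete and matches the paper's.
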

\begin{proof}
Observe that 
\begin{eq}
 &\E_t\left(\zeta_n(t+1)\eta_1(t+1)\right)=\E_t\left(\zeta_n(t+1) \E_t\left(\eta_1(t+1)|\zeta_n(t+1)\right)\right)\\
& =\frac{X_n(t)}{Y_n(t)}\E_t\left(\zeta_n(t+1)(\zeta_n(t+1)-1)\right)=\frac{X_n(t)}{Y_n(t)}(\sigma^2+\mu^2-\mu),
\end{eq}
and therefore,
\begin{equation}\label{covar-zeta-eta1}
 \mathrm{Cov}_t(\zeta_n(t+1),\eta_1(t+1))= \frac{X_n(t)}{Y_n(t)}\sigma^2.
\end{equation}
Thus,
\begin{eq}
 &\exptt{\zeta_n(t+1)\eta_2(t+1)} = \exptt{\zeta_n(t+1)\exptt{\eta_2(t+1)\vert \eta_1(t+1),\zeta_n(t+1)}}\\
 & = \exptt{\zeta_n(t+1)(X_n(t)-1-\eta_1(t+1))\frac{\lambda}{n}} =  \lambda\mu \frac{X_n(t)}{n}+\OP(n^{-1})
\end{eq}
and hence
\begin{eq}\label{covar-zeta-eta2}
 \mathrm{Cov}_t\left( \zeta_n(t+1),\eta_2(t+1)\right) = \OP(n^{-1}).
\end{eq}
Combining \eqref{covar-zeta-eta1} and \eqref{covar-zeta-eta2} yields~\eqref{covar-zeta-xi}.
\end{proof}

Based on the quadratic variation and covariation results above, the following lemma shows that the martingales when scaled by $n$, converge to the zero-process.
\begin{lemma}\label{lem:qv-order}
For any fixed $T\geq 0$, as $n\to\infty$,
\begin{eq}
\frac{1}{n}\sup_{t\leq T}|\bld{M}_n^{\sss X}(\floor{nt})| \pto 0, \quad \frac{1}{n}\sup_{t\leq T}|\bld{M}_n^{\sss Y}(\floor{nt})| \pto 0.
\end{eq}
\end{lemma}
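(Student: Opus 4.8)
The plan is to use the standard martingale-to-zero argument based on the Doob–Kolmogorov maximal inequality combined with the quadratic variation bounds just established. Since $\bld{M}_n^{\sss X}$ and $\bld{M}_n^{\sss Y}$ are square-integrable martingales, Doob's $L^2$ inequality gives
\begin{equation*}
\E\Big[\sup_{t\leq T}|M_n^{\sss X}(\floor{nt})|^2\Big]\leq 4\,\E\big[M_n^{\sss X}(\floor{nT})^2\big]=4\,\E\big[\langle M_n^{\sss X}\rangle(\floor{nT})\big],
\end{equation*}
and similarly for $\bld{M}_n^{\sss Y}$. By \eqref{defn:var-covar-mart}, the predictable quadratic variation is a sum of at most $\floor{nT}$ conditional variances, so it suffices to show each conditional variance is $O(1)$ uniformly. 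For $\bld{M}_n^{\sss Y}$ this is immediate from the Claim giving $\mathrm{Var}_t(\zeta_n(t+1))=\sigma^2$, hence $\E[\langle M_n^{\sss Y}\rangle(\floor{nT})]\leq nT\sigma^2$. For $\bld{M}_n^{\sss X}$, the Claim giving \eqref{var-xi} shows $\mathrm{Var}_t(\xi_n(t+1))=\frac{X_n(t)(\mu-1)}{Y_n(t)}+\frac{\lambda X_n(t)}{n}+\OP(n^{-1})$; since $0\leq X_n(t)\leq Y_n(t)$ always and $X_n(t)\leq n$, each term is bounded by a constant depending only on $c,\alpha$, so $\E[\langle M_n^{\sss X}\rangle(\floor{nT})]\leq C(c,\alpha)\,n$ for some finite constant.

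Putting these together, Doob's inequality yields $\E[\sup_{t\leq T}|M_n^{\sss X}(\floor{nt})|^2]=O(n)$ and likewise for $Y$. Therefore, by Markov's inequality, for any $\varepsilon>0$,
\begin{equation*}
\PP\Big(\frac{1}{n}\sup_{t\leq T}|M_n^{\sss X}(\floor{nt})|>\varepsilon\Big)\leq\frac{\E[\sup_{t\leq T}|M_n^{\sss X}(\floor{nt})|^2]}{\varepsilon^2 n^2}=O(n^{-1})\to 0,
\end{equation*}
which gives the first claimed convergence; the second follows identically. One mild technical point to address is that the $\OP(n^{-1})$ error terms in \eqref{var-xi} need to be handled carefully: since these represent deterministic-order corrections uniform in $t$ (arising from $X_n(t)\leq Y_n(t)$ and $X_n(t)/n\leq 1$), their contribution to $\E[\langle M_n^{\sss X}\rangle(\floor{nT})]$ is $O(1)$, which is negligible compared to the leading $O(n)$ term and does not affect the conclusion.

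The main obstacle — which is really a minor bookkeeping issue rather than a genuine difficulty — is ensuring the conditional variance bounds are genuinely uniform over $t\leq nT$ and hold as honest inequalities (not merely asymptotically), so that summing them produces a clean $O(n)$ bound on the expected quadratic variation. One should either track explicit constants through the computations in the preceding claims, or phrase the $\OP$ terms there as being bounded by $C/n$ deterministically on the (probability-one) event $\{X_n(t)\leq Y_n(t)\text{ for all }t\}$. Once that is in place, everything reduces to the one-line application of Doob's inequality and Markov's inequality above. It is also worth noting that the filtration and square-integrability of both martingales were already asserted when the decompositions \eqref{eq:mart-decompose} and \eqref{eq:repr-yn} were introduced, so no additional integrability verification is needed here.
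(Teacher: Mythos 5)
Your proposal is correct and follows essentially the same route as the paper: both rest on the quadratic-variation bounds from the preceding claims (order $n$ after summing at most $\floor{nT}$ uniformly bounded conditional variances) combined with Doob's maximal inequality, with your explicit expectation bound and Markov step being only a more detailed write-up of the paper's two-line argument. The care you take with the $\OP(n^{-1})$ terms (noting they are deterministically $O(1/n)$ since $X_n(t)\leq Y_n(t)\leq n$) is a reasonable tightening but not a different method.
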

\begin{proof}
Observe that using \eqref{defn:var-covar-mart} along with \eqref{var-zeta} and \eqref{var-xi}, we can claim for any $T\geq 0$,
\begin{eq}
\langle \bld{M}_n^{\sss X}  \rangle (\floor{nT})=\OP(n),\quad
\langle \bld{M}_n^{\sss Y}  \rangle (\floor{nT})=\OP(n).
\end{eq}
Thus, from Doob's inequality \cite[Theorem 1.9.1.3]{LS89}, the proof follows.
\end{proof}
\subsection{Convergence of the scaled exploration process}
\label{sec:fluidconv}

Based on the estimates from Sections~~\ref{sec:mart-decompose},~and~\ref{sec:qvqcv}, we now complete the proof of Theorem~\ref{th:fluid}.
 Recall the representations of $\bld{x}_n$, and $\bld{y}_n$ from \eqref{eq:mart-decompose} and \eqref{eq:repr-yn}.
 Fix any $ 0 \leq T < 1/\mu$.
 Observe that Lemma~\ref{lem:qv-order} immediately yields
 \begin{eq}\label{fluid-y}
 \sup_{t\leq T} |y_n(t)-y(t)| \pto 0.
 \end{eq}
  Next note that $\delta (x,y)$, as defined in~\eqref{eq:delta}, is Lipschitz continuous on $[0,1]\times[\epsilon, 1]$ for any $\epsilon >0$ and
 we can choose this $\epsilon > 0$ in such a way that $y(t)\geq \epsilon$ for all $t\leq T$ (since $T<1/\mu$).
 Therefore, the Lipschitz continuity of $\delta$ implies that  there exists a constant $C>0$ such that
 \begin{align*}
 &\sup_{t\leq T}|\delta(x_n(t),y_n(t))-\delta(x(t),y(t))|
\leq C \Big(\sup_{t\leq T}| x_n(t)-x(t)|+\sup_{t\leq T}| y_n(t)-y(t)|\Big).
 \end{align*}Thus,
 \begin{align*}
  \sup_{t\leq T} |x_n(t)-x(t)|  &\leq \sup_{t\leq T}\frac{|M_n^{\sss X}(\floor{nt})|}{n} 
  +\int_0^T \sup_{t\leq u}  |\delta(x_n(t),y_n(t))
  -\delta(x(t),y(t))| \dif u +\oP(1)\\
  &\leq \varepsilon_n + C \int_0^T \sup_{t\leq u}  | x_n(t)-x(t)| \dif u,
 \end{align*}
 where, by Lemma~\ref{lem:qv-order} and \eqref{fluid-y},
 \begin{align*}
  \varepsilon_n :=  \sup_{t\leq T}\frac{|M_n^{\sss X}(\floor{nt})|}{n} + CT \sup_{t\leq T} |y_n(t)-y(t)|+\oP(1),
 \end{align*}
which converges in probability to zero, as $n\to\infty$.
Using Gr\H{o}nwall's inequality~\cite[Theorem~5.1]{EK2009}, we get
\begin{equation}\label{eq:fluid-conv-eqn}
 \sup_{t\leq T}|x_n(t)-x(t)|\leq  \varepsilon_n \e^{CT}\pto 0.
\end{equation}
Finally, due to Claim~\ref{cl:upperbound} below we note that the smallest root of $x(t)$ is strictly smaller than $1/\mu$.
Also, the convergence in \eqref{eq:fluid-conv-eqn} holds for any $T<1/\mu$.
This concludes the proof of Theorem~\ref{th:fluid}.\qed


The claim below establishes that $J^{\star}<1/\mu$.
\begin{claim}\label{cl:upperbound}
$J^\star<1/\mu$.
\end{claim}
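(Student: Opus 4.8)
The plan is to analyze the integral equation \eqref{diffeq} near $t = 1/\mu$ and show that $x(t)$ must vanish strictly before that point. Recall $\mu = 1 + \alpha c$, so $1/\mu = 1/(1+\alpha c)$, and the first integrand in \eqref{diffeq} has denominator $1 - (\alpha c + 1)s = 1 - \mu s$, which goes to zero as $s \uparrow 1/\mu$. The heuristic is that this singular term forces the integral to blow up (driving $x$ down to $0$) unless $x$ itself has already reached $0$; in either case the smallest root $J^\star$ is at most $1/\mu$, and we want the strict inequality.

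First I would argue by contradiction: suppose $x(t) > 0$ for all $t \in [0, 1/\mu)$. By Theorem~\ref{th:fluid} (or directly from \eqref{diffeq}) $x$ is continuous and non-increasing in a neighborhood of any point where it is positive, since the integrand is nonnegative whenever $x(s) \geq 0$ and $1 - \mu s > 0$; in fact $x(0) = 1$ and $x$ is strictly decreasing on $[0, 1/\mu)$. Then on an interval $[t_0, 1/\mu)$ with $t_0$ close to $1/\mu$, we have $x(s) \geq x(t_0) =: \delta > 0$ for all $s \in [t_0, t)$ (monotonicity), hence
\begin{equation*}
\int_{t_0}^{t} \frac{x(s)\,\alpha c}{1 - \mu s}\,\dif s \;\geq\; \delta\,\alpha c \int_{t_0}^{t} \frac{\dif s}{1 - \mu s} \;=\; \frac{\delta\,\alpha c}{\mu}\,\log\!\frac{1 - \mu t_0}{1 - \mu t} \;\longrightarrow\; \infty
\end{equation*}
as $t \uparrow 1/\mu$, provided $\alpha c > 0$. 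Since the other contributions to \eqref{diffeq} ($1 - t$ and the bounded term $\int_0^t (1-\alpha)c\, x(s)\,\dif s$) stay bounded on $[0, 1/\mu]$, we would get $x(t) \to -\infty$, contradicting $x(t) > 0$. Therefore $x$ has a root in $[0, 1/\mu)$, i.e.\ $J^\star < 1/\mu$ strictly.

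The main obstacle is the boundary case $\alpha c = 0$, i.e.\ $\alpha = 0$ (or $c = 0$, though $c > 0$ is assumed so this means $\alpha = 0$): then $\mu = 1$, the singular term disappears entirely, and \eqref{diffeq} reduces to $x(t) = 1 - t - \int_0^t c\, x(s)\,\dif s$, whose solution is $x(t) = \frac{1}{c}(1 - e^{-ct}) \cdot$ (after solving the linear ODE $x' = -1 - c x$, $x(0)=1$, one gets $x(t) = (1 + 1/c)e^{-ct} - 1/c$), which hits zero at $t = \frac{1}{c}\log(1 + c) < 1 = 1/\mu$. So the claim still holds, but via an explicit computation rather than the blow-up argument; I would handle $\alpha = 0$ separately at the start, then assume $\alpha c > 0$ for the main argument. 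A secondary technical point is justifying that $x$ is genuinely monotone decreasing on the interval where it is positive and that the solution of \eqref{diffeq} is well-defined and continuous up to (but not including) $1/\mu$ — this follows from the Lipschitz/Grönwall analysis already carried out in Section~\ref{sec:fluidconv} for any $T < 1/\mu$, so it can be cited rather than redone.
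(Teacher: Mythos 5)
Your treatment of the $\alpha=0$ case via the explicit solution is fine, but the main argument for $\alpha c>0$ has a genuine gap: the monotonicity inequality is used in the wrong direction. You correctly observe that $x$ is strictly decreasing wherever it is nonnegative (indeed $x'(t)\le -1$ there), but a decreasing function satisfies $x(s)\le x(t_0)$ for $s\in[t_0,t)$, not $x(s)\ge x(t_0)$. Under the contradiction hypothesis that $x>0$ on $[0,1/\mu)$ you only know positivity, so your blow-up estimate works only if $\delta:=\lim_{t\uparrow 1/\mu}x(t)>0$. The scenario it cannot exclude is $x(t)\downarrow 0$ as $t\uparrow 1/\mu$ with $x>0$ before, in which case $\int_0^t x(s)\,\alpha c/(1-\mu s)\,\dif s$ can remain bounded and the smallest root sits exactly at $1/\mu$. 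This is not a removable technicality: at the boundary value $\alpha=1$ (so $\lambda=(1-\alpha)c=0$) the solution of \eqref{diffeq} is exactly $x(t)=1-\mu t$, which realizes precisely this scenario and gives $J^\star=1/\mu$. Hence no purely qualitative divergence argument can deliver the strict inequality; any correct proof must use $\lambda>0$ quantitatively (and must treat $\alpha=1$ as a genuine exception, where the strict inequality in fact fails).

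This is also where your route differs from the paper's. The paper solves the linear equation in closed form, \eqref{eq:ode-sol}, and reduces the claim to showing that $\mathcal{I}(t)=\int_0^t \e^{\lambda s}(1-\mu s)^{-1+1/\mu}\dif s$ reaches the value $1$ strictly before $1/\mu$. The singularity at $1/\mu$ is integrable, with $\int_0^{1/\mu}(1-\mu s)^{-1+1/\mu}\dif s=1$ exactly, so what produces strictness is the factor $\e^{\lambda s}>1$ for $s>0$, i.e.\ $\lambda>0$, which forces $\mathcal{I}(1/\mu)>1$ and hence a root of \eqref{eq:sol-dif-eqn} strictly below $1/\mu$ (the paper's phrasing that the integral tends to infinity overstates this, but the explicit-solution route isolates exactly where the strict inequality comes from). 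To salvage your argument you would either have to pass through this explicit solution as well, or supplement the blow-up argument with a separate step ruling out $x(t)\to 0$ at $t=1/\mu$ when $\lambda>0$; as it stands, the key step fails exactly in the delicate regime.
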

\begin{proof}
Recall that $\mu = (1+\alpha c)$ and $\lambda = (1-\alpha) c$.
 Notice that \eqref{diffeq} gives a linear differential equation, and the solution is given by
\begin{eq}\label{eq:ode-sol}
x(t) = \e^{-\lambda t} (1-\mu t)^{\frac{\mu-1}{\mu}}\bigg(1-\int_0^t\e^{\lambda s}(1-\mu s)^{-1+\frac{1}{\mu}}\dif s\bigg), \quad t<\frac{1}{\mu}.
\end{eq} 
Thus, the smallest root of the integral equation \eqref{diffeq} defined as $J^\star$  must be the smallest positive solution of 
\begin{equation}\label{eq:sol-dif-eqn}
 \mathcal{I}(t)=\int_0^t\e^{\lambda s}(1-\mu s)^{-1+\frac{1}{\mu}}\dif s = 1.
\end{equation}
The integrand in the left hand side of \eqref{eq:sol-dif-eqn} is positive, and tends to $\infty$ as $t$ increases to $1/\mu$. 
Therefore, the integral $ \int_0^t\e^{\lambda s}(1-\mu s)^{-1+1/\mu}\dif s$ tends to infinity as well.
Thus, there must exist a solution of \eqref{eq:sol-dif-eqn} which is smaller that $1/\mu$.
This in turn implies that $J^\star<\mu^{-1}$.
\end{proof}
We now complete the proof of Corollary~\ref{cor:J-large-c}.
\begin{proof}[Proof of Corollary~\ref{cor:J-large-c}] 
Observe from~\eqref{eq:ode-sol} and \eqref{eq:sol-dif-eqn} that, for $t<\mu^{-1}$, 
 \begin{align*}
  &\mathcal{I}(t)\geq  
  1 - \e^{\lambda t}(1-\mu t)^{\frac{1}{\mu}}\int_0^t \frac{\dif s}{1-\mu s}
  \geq 1-\e^{\frac{\lambda}{\mu}} \bigg[-\frac{1}{\mu} \log(1-\mu s)\bigg]_0^t \sim 1-\e^{\frac{\lambda}{\mu}}  \frac{1}{\mu}\log(1-\mu t),
 \end{align*}
 and the last term is zero when
 \begin{eq}
  t = \frac{1}{\mu} (1-\e^{-\mu \e^{-\frac{\lambda}{\mu}}}) \sim \frac{1}{\mu} \text{ as } \mu\to\infty. 
 \end{eq}Since $J^\star \geq (1-\e^{-\mu \e^{-\frac{\lambda}{\mu}}})$, the proof is complete.
\end{proof}

\section{Proof of Theorem~\texorpdfstring{\ref{th:jam-diffusion}}{3.2}}\label{sec:diffconv}
Define the diffusion-scaled processes
\begin{eq}\label{eq:diff-scale}
\bar{X}_n(t):=\sqrt{n}(x_n(t)-x(t)), \quad
\bar{Y}_n(t):=\sqrt{n}(y_n(t)-y(t)),
\end{eq}
and the diffusion-scaled martingales
$$\bar{M}_n^{\sss X} (t):= \frac{M_n^{\sss X}(\floor{nt})}{\sqrt{n}},\quad\bar{M}_n^{\sss Y} (t):= \frac{M_n^{\sss Y}(\floor{nt})}{\sqrt{n}}.$$
Now observe from~\eqref{eq:mart-decompose} that
\begin{align*}
\bar{X}_n(t)
&=\bar{M}_n^{\sss X}(t)-(\mu-1)\left[ \int_0^t\frac{\bar{X}_n(s)}{y_n(s)}\dif s 
+\int_0^tx(s)\sqrt{n}\left(\frac{1}{y_n(s)}-\frac{1}{y(s)}\right)\dif s\right]\\
&\hspace{7cm} -\lambda\int_0^t\bar{X}_n(s)\dif s+\OP(n^{-1/2})\\
&=\bar{M}_n^{ \sss X}(t)-(\mu-1)\int_0^t\frac{\bar{X}_n(s)}{y_n(s)}\dif s 
+  \int_o^t\frac{x(s)(\mu-1)}{y_n(s)y(s)}\bar{Y}_n(s) \dif s\\
&\hspace{7cm}-\lambda\int_0^t\bar{X}_n(s)\dif s+\oP(1).
\end{align*}
Therefore, we can write
\begin{eq}\label{diff-xn-simpl}
\bar{X}_n(t) &= \bar{M}_n^{\sss X}(t) + \int_0^t f_n(s)\bar{X}_n(s)\dif s 
+ \int_0^t g_n(s)\bar{Y}_n(s)\dif s + \oP(1),
\end{eq}
where
\begin{eq}
 f_n(t) = -\frac{(\mu-1)}{y_n(t)} -\lambda, \quad g_n(t) = \frac{(\mu-1)x(t)}{y_n(t)y(t)}.
\end{eq}
Furthermore,~\eqref{eq:repr-yn} yields
\begin{eq}\label{diff-yn-simpl}
 &\bar{Y}_n(t) =  \sqrt{n}(y_n(t)-y(t)) =  \bar{M}_n^{\sss Y}(t) +\oP(1).
\end{eq}
Based on the quadratic variation and covariation results in Section~\ref{sec:qvqcv}, the following lemma shows that the martingales when scaled by $\sqrt{n}$ converge to a diffusion process described by an SDE.
\begin{lemma}[{Diffusion limit of martingales}]
\label{lem:conv-mart}
As $n\to\infty$, $(\bar{\bld{M}}_n^{\sss X}, \bar{\bld{M}}_n^{\sss Y})\dto (\bld{W}_1,\bld{W}_2)$, where the process $(\bld{W}_1,\bld{W}_2)$ is described by the {\rm SDE}
\begin{eq}
 \dif W_1(t) = \sqrt{\beta(t)}\left[ \rho(t)\dif B_1(t)+\sqrt{1-\rho(t)^2}\dif B_2(t) \right], \quad \dif W_2(t) = \sigma \dif B_1(t)
\end{eq}with $\bld{B}_1$ and $\bld{B}_2$ two independent standard Brownian motions.
\end{lemma}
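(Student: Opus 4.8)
The plan is to apply a vector-valued martingale functional central limit theorem (e.g.\ \cite[Theorem~7.1.4]{EK2009}) to the $\R^2$-valued square-integrable martingale $(\bar{\bld M}_n^{\sss X},\bar{\bld M}_n^{\sss Y})$. Fix $T<1/\mu$; everything below is on $[0,T]$. Two inputs are required: convergence of the predictable quadratic covariation matrix to a deterministic continuous limit, and asymptotic negligibility of the jumps.

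For the first input I would start from \eqref{defn:var-covar-mart}, which writes the time-changed, $\sqrt n$-scaled covariations as the Riemann-type sums
\begin{align*}
\langle \bar M_n^{\sss X}\rangle(t) &= \frac{1}{n}\sum_{i=1}^{\floor{nt}}\mathrm{Var}_{i-1}(\xi_n(i)) \;\pto\; \int_0^t\beta(s)\,\dif s,\\
\langle \bar M_n^{\sss Y}\rangle(t) &= \frac{1}{n}\sum_{i=1}^{\floor{nt}}\mathrm{Var}_{i-1}(\zeta_n(i)) \;\pto\; \sigma^2 t,\\
\langle \bar M_n^{\sss X},\bar M_n^{\sss Y}\rangle(t) &= \frac{1}{n}\sum_{i=1}^{\floor{nt}}\mathrm{Cov}_{i-1}(\xi_n(i),\zeta_n(i)) \;\pto\; \int_0^t\sigma^2\frac{x(s)}{y(s)}\,\dif s,
\end{align*}
uniformly on $[0,T]$. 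The limits follow by substituting Claims \eqref{var-zeta}, \eqref{var-xi} and \eqref{covar-zeta-xi}: the leading terms are continuous functions of $(x_n((i-1)/n),y_n((i-1)/n))$ (with $\beta(s)=[(\mu-1)/y(s)+\lambda]x(s)$ assembling the two terms of \eqref{var-xi}), the $\OP(n^{-1})$ remainders are uniform in $i$ so their scaled sums are $\oP(1)$, and since $y(t)=1-\mu t\ge 1-\mu T>0$ on $[0,T]$ the relevant functions are uniformly continuous on the compact set $\{0\le x\le y,\ y\ge(1-\mu T)/2\}$; by Theorem~\ref{th:fluid}, $(x_n,y_n)$ stays in this set with high probability and $\sup_{t\le T}(|x_n(t)-x(t)|+|y_n(t)-y(t)|)\pto0$, so the Riemann sums converge uniformly to the corresponding integrals.

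For the second input, each increment of $M_n^{\sss X}$ is $\xi_n(i)-\E_{i-1}\xi_n(i)$, and $\xi_n(i)=1+\eta_1(i)+\eta_2(i)$ is stochastically dominated, uniformly in $n$ and $i$, by $1+\mathrm{Poisson}(\alpha c)+\mathrm{Bin}(n,\lambda/n)$, which has a uniformly bounded fourth moment; similarly $\zeta_n(i)$ is dominated by $1+\mathrm{Poisson}(\alpha c)$. A union bound then gives, for each $\varepsilon>0$, $\PP(\max_{i\le\floor{nT}}|\bar M_n^{\sss X}(i/n)-\bar M_n^{\sss X}((i-1)/n)|>\varepsilon)\le \floor{nT}\,C\varepsilon^{-4}n^{-2}\to0$, and likewise for $\bar M_n^{\sss Y}$, so the maximal jump of the diffusion-scaled martingales vanishes in probability --- the Lindeberg-type hypothesis of the FCLT. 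The theorem then yields $(\bar{\bld M}_n^{\sss X},\bar{\bld M}_n^{\sss Y})\dto(\bld W_1,\bld W_2)$ in $D_{\R^2}[0,T]$, where $(\bld W_1,\bld W_2)$ is the continuous centered Gaussian martingale, unique in law, with deterministic covariation density $a(t)=\left(\begin{smallmatrix}\beta(t)&\sigma^2 x(t)/y(t)\\ \sigma^2 x(t)/y(t)&\sigma^2\end{smallmatrix}\right)$. Finally one verifies that the SDE in the statement is exactly the representation $\dif(W_1,W_2)^\top=L(t)\,\dif(B_1,B_2)^\top$ with $L(t)=\left(\begin{smallmatrix}\sqrt{\beta(t)}\,\rho(t)&\sqrt{\beta(t)}\sqrt{1-\rho(t)^2}\\ \sigma&0\end{smallmatrix}\right)$: a direct computation gives $L(t)L(t)^\top=a(t)$ using $\sigma\sqrt{\beta(t)}\,\rho(t)=\sigma^2 x(t)/y(t)$ from \eqref{defn:functions}, and $L(t)$ is well defined since $\rho(t)^2=\sigma^2 x(t)^2/(\beta(t)y(t)^2)\le1$, which holds because $\beta(t)\ge(\mu-1)x(t)/y(t)=\sigma^2 x(t)/y(t)\ge\sigma^2 x(t)^2/y(t)^2$ by $\mu-1=\sigma^2$ and $0\le x(t)\le y(t)$. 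Since a continuous Gaussian martingale is determined in law by its deterministic covariation, this identifies the limit and finishes the proof.

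The step I expect to be the main obstacle is the first input: upgrading the pointwise estimates of Claims \eqref{var-xi}--\eqref{covar-zeta-xi} to genuinely \emph{uniform}-in-$t$ convergence of the covariation processes. This needs the $\OP(n^{-1})$ error terms there to be uniform over the time index (so that dividing by $n$ and summing over $\floor{nt}$ steps truly yields $\oP(1)$) and requires controlling the ratio $X_n/Y_n$ as $t$ approaches the singularity $1/\mu$; both are handled by restricting to $[0,T]$ with $T<1/\mu$ and invoking the fluid limit of Theorem~\ref{th:fluid} to keep $y_n$ bounded away from zero. The jump estimate and the algebraic identification of the limiting SDE are routine by comparison.
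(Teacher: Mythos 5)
Your proposal is correct and follows essentially the same route as the paper: both invoke the martingale functional CLT after showing that the scaled predictable quadratic variations and covariation converge (via the variance/covariance claims and the fluid limit of Theorem~\ref{th:fluid}) to $\int_0^t\beta(s)\,\dif s$, $\sigma^2 t$ and $\int_0^t\sigma\sqrt{\beta(s)}\,\rho(s)\,\dif s$, and then identify the limit with the stated SDE. Your additional verifications --- the jump-negligibility condition via stochastic domination and fourth moments, and the explicit check that $L(t)L(t)^{\sss T}$ equals the covariation density with $\rho(t)^2\le 1$ --- are details the paper leaves implicit, not a different method.
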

\begin{proof}
 The idea is to use the martingale functional central limit theorem (cf.~\cite[Theorem 8.1]{PTRW07}), where the convergence of the martingales is characterized by the convergence of their quadratic variation process.
 Using Theorem~\ref{th:fluid}, we compute the asymptotics of the quadratic variations and covariation of $\bar{\bld{M}}_n^{\sss X}$ and $\bar{\bld{M}}_n^{\sss Y}$.
From \eqref{eq:mart-decompose} and \eqref{var-zeta}, we obtain
\begin{align*}
 \langle \bar{M}_n^{\sss Y} \rangle (t) = \frac{1}{n} \sum_{i=1}^{\floor{nt}}\mathrm{Var}_{i-1}(\zeta_n(i)) \pto \sigma^2 t.
\end{align*}
Again, \eqref{eq:mart-decompose}, \eqref{var-xi} and Theorem~\ref{th:fluid} yields
\begin{align*}
 \langle \bar{M}_n^{\sss X} \rangle (t)
=\frac{1}{n}\sum_{i=1}^{\floor{nt}}\mathrm{Var}_{i-1}(\xi_n(i))
  \pto
 \int_0^t\left[\frac{(\mu-1)}{y(s)}+\lambda\right] x(s)\dif s 
 = \int _0^t\beta(s)\dif s.
\end{align*}
Finally, from \eqref{eq:mart-decompose}, \eqref{covar-zeta-xi} and Theorem~\ref{th:fluid} we obtain
\begin{align*}
\langle \bar{M}_n^{\sss X}, \bar{M}_n^{\sss Y} \rangle (t)=
\frac{1}{n} \sum_{i=1}^{\floor{nt}}\mathrm{Cov}_{i-1}(\zeta_n(i),\xi_n(i))
 \pto \sigma^2\int_0^t\frac{x(s)}{y(s)}\dif s
   = \int_0^t \rho(s)\times \sigma \sqrt{\beta(s)}\dif s.
\end{align*}
From the martingale functional central limit theorem, 
we get that $(\bar{\bld{M}}_n^{\sss X}, \bar{\bld{M}}_n^{\sss Y})\dto (\hat{\bld{W}}_1,\hat{\bld{W}}_2)$, where $(\hat{\bld{W}}_1,\hat{\bld{W}}_2)$ are Brownian motions with zero means and quadratic covariation matrix
$$\begin{bmatrix}
\int _0^t\beta(s)\dif s & \int_0^t \rho(s)\times \sigma \sqrt{\beta(s)}\dif s\\
\int_0^t \rho(s)\times \sigma \sqrt{\beta(s)}\dif s & \sigma^2 t
\end{bmatrix}.
$$
The proof then follows by noting the fact that $(\bld{W}_1,\bld{W}_2)\disteq (\hat{\bld{W}}_1,\hat{\bld{W}}_2).$
\end{proof}
Having proved the above convergence of martingales, we now establish weak convergence of the scaled exploration process to a suitable diffusion process.
\begin{proposition}[{Functional CLT of the exploration process}]
\label{prop-diffusion}
 As $n\to\infty$, $(\bar{\bld{X}}_n,\bar{\bld{Y}}_n)\dto (\bld{X},\bld{Y})$ where $(\bld{X},\bld{Y})$ is the two-dimensional stochastic process satisfying the SDE
 \begin{eq}\label{eq:diffusion-SDE}
  \dif X(t)&=\sqrt{\beta(t)}\left[ \rho(t)\dif B_1(t)+\sqrt{1-\rho(t)^2}\dif B_2(t) \right]
  +f(t)X(t)\dif t + g(t)Y(t) \dif t,\\
  \dif Y(t)&=\sigma\dif B_1(t),
 \end{eq}
 with $\bld{B}_1$, $\bld{B}_2$ being independent standard Brownian motions, and $f(t)$, $g(t)$ and $\rho(t)$  as defined in~\eqref{defn:functions}.
\end{proposition}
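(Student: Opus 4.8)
The plan is to read the pair of relations \eqref{diff-xn-simpl}--\eqref{diff-yn-simpl} as a \emph{linear} integral system driven by the diffusion‑scaled martingales $\bar{\bld{M}}_n^{\sss X},\bar{\bld{M}}_n^{\sss Y}$, whose joint weak limit has already been identified in Lemma~\ref{lem:conv-mart}, and then to pass to the limit by a tightness–plus–uniqueness argument (equivalently, by a continuous‑mapping argument for the resolvent of a linear Volterra equation). First I would fix $T<1/\mu$; by Claim~\ref{cl:upperbound} it is enough to prove the convergence on $[0,T]$. From \eqref{fluid-y} we have $\sup_{t\le T}|y_n(t)-y(t)|\pto 0$, and since $y(t)=1-\mu t\ge\epsilon$ on $[0,T]$ for some $\epsilon>0$, the coefficients $f_n,g_n$ are well defined, bounded on $[0,T]$ with probability tending to one, and satisfy $\sup_{t\le T}|f_n-f|\pto 0$ and $\sup_{t\le T}|g_n-g|\pto 0$, with $f,g$ as in \eqref{defn:functions}. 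Substituting \eqref{diff-yn-simpl} into \eqref{diff-xn-simpl} and using $\sup_{t\le T}|g_n(t)|=\OP(1)$ to absorb the remainders, one is left with
\begin{equation*}
 \bar X_n(t)=\psi_n(t)+\int_0^t f_n(s)\bar X_n(s)\,\dif s,\qquad
 \psi_n(t):=\bar M_n^{\sss X}(t)+\int_0^t g_n(s)\bar M_n^{\sss Y}(s)\,\dif s+\oP(1),
\end{equation*}
where the $\oP(1)$ is uniform on $[0,T]$.

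Next I would establish tightness. The process $\bar{\bld Y}_n=\bar{\bld M}_n^{\sss Y}+\oP(1)$ is tight with continuous weak limits by Lemma~\ref{lem:conv-mart}. For $\bar{\bld X}_n$, Gr\"onwall's inequality applied to the displayed equation gives $\sup_{t\le T}|\bar X_n(t)|\le \e^{T\sup_{t\le T}|f_n(t)|}\,\sup_{t\le T}|\psi_n(t)|=\OP(1)$, and the oscillation of $\bar{\bld X}_n$ over small time intervals is bounded by that of $\psi_n$ (hence of $\bar{\bld M}_n^{\sss X}$ and $\bar{\bld M}_n^{\sss Y}$) plus the Lipschitz‑in‑time contribution $\sup|f_n|\cdot\sup|\bar X_n|$ of the integral term; standard estimates then yield $C$‑tightness of $\bar{\bld X}_n$. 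Consequently the four‑tuple $(\bar{\bld X}_n,\bar{\bld Y}_n,\bar{\bld M}_n^{\sss X},\bar{\bld M}_n^{\sss Y})$ is tight in $D_{\R^4}[0,T]$, with every subsequential limit having continuous paths, so that on $[0,T]$ Skorohod $J_1$‑convergence is just uniform convergence.

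To identify the limit I would argue along an arbitrary convergent subsequence, invoking the Skorohod representation theorem to realize the convergence almost surely. The martingale coordinates then converge to $(\bld W_1,\bld W_2)$ of Lemma~\ref{lem:conv-mart}, and, using $\sup_{t\le T}|f_n-f|\pto0$, $\sup_{t\le T}|g_n-g|\pto0$ together with the uniform convergence of $\bar{\bld X}_n,\bar{\bld Y}_n$, I can pass to the limit in the integral equation to get
\begin{equation*}
 X(t)=W_1(t)+\int_0^t g(s)W_2(s)\,\dif s+\int_0^t f(s)X(s)\,\dif s,\qquad Y(t)=W_2(t),\qquad t\le T.
\end{equation*}
This linear integral system has a pathwise‑unique solution (Gr\"onwall again), so the subsequential limit is uniquely determined by $(\bld W_1,\bld W_2)$; hence the whole sequence $(\bar{\bld X}_n,\bar{\bld Y}_n)$ converges in distribution to this $(\bld X,\bld Y)$. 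Differentiating and inserting the representations $\dif W_1(t)=\sqrt{\beta(t)}\,[\rho(t)\,\dif B_1(t)+\sqrt{1-\rho(t)^2}\,\dif B_2(t)]$ and $\dif W_2(t)=\sigma\,\dif B_1(t)$ from Lemma~\ref{lem:conv-mart} shows $(\bld X,\bld Y)$ solves exactly \eqref{eq:diffusion-SDE}; as this is a linear SDE with coefficients bounded on $[0,T]$, it has a unique strong solution, which is the process named in the statement. Since $T<1/\mu$ was arbitrary, the convergence holds on $[0,1/\mu)$.

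\textbf{Main obstacle.} The delicate points are the uniform control of the $\oP(1)$ remainders \emph{inside} the time integrals and the passage to the limit in an integral equation with \emph{random} coefficients $f_n,g_n$. The first rests on $\sup_{t\le T}|g_n(t)|=\OP(1)$, i.e.\ on $y_n$ staying bounded away from $0$ on $[0,T]$ with high probability, which follows from \eqref{fluid-y} and $T<1/\mu$; the second is cleanly handled by the Skorohod coupling. Alternatively one can bypass subsequences entirely by writing the solution of the linear Volterra equation via its resolvent, $\bar X_n(t)=\psi_n(t)+F_n(t)\int_0^t F_n(s)^{-1}f_n(s)\psi_n(s)\,\dif s$ with $F_n(t)=\exp\!\big(\int_0^t f_n(u)\,\dif u\big)$, and observing that the map $(\psi_n,f_n)\mapsto\bar X_n$ is continuous for the local uniform topology, so that the statement follows directly from Lemma~\ref{lem:conv-mart} and the continuous‑mapping theorem.
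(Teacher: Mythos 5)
Your proposal is correct and follows essentially the same route as the paper: it rests on Lemma~\ref{lem:conv-mart}, the uniform convergence $\sup_{t\le T}|f_n-f|,\ \sup_{t\le T}|g_n-g|\pto 0$ for $T<1/\mu$, a Gr\"onwall bound on $\sup_{t\le T}|\bar X_n(t)|$, and a subsequence-plus-uniqueness (equivalently continuous-mapping) identification of the limit as the unique solution of the linear SDE \eqref{eq:diffusion-SDE}. Your only additions are a more explicit $C$-tightness/oscillation argument (the paper passes from stochastic boundedness to subsequential weak convergence more tersely, citing \cite[Lemma 5.8]{PTRW07}) and the optional resolvent formula, both of which are consistent refinements rather than a different method.
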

\begin{proof}
First we show that  $((\bar{\bld{X}}_n,\bar{\bld{Y}}_n))_{n\geq 1}$ is a stochastically bounded sequence of processes.
Indeed stochastic boundedness (and in fact weak convergence) of the $\bar{\YY}_n$ process follows from Lemma~\ref{lem:conv-mart}.
Further observe that for any $T<1/\mu$, by Theorem~\ref{th:fluid},
\begin{eq}\label{eq:limit-fg}
 \sup_{t\leq T} |f_n(t)-f(t)| \pto 0, \qquad \sup_{t\leq T} |g_n(t)-g(t)| \pto 0,
\end{eq}
where $f,g$ are defined in \eqref{defn:functions}.
Therefore, for any $T<1/\mu$,
 \begin{align*}
  \sup_{t\leq T} |\bar{X}_n(t)|&\leq \sup_{t\leq T} |\bar{M}_n^{\sss X}(t)| +T\sup_{t\leq T}|g_n(t)\bar{Y}_n(t)|
+ \sup_{t\leq T}|f_n(t)| \int_0^T \sup_{u\leq t} |\bar{X}_n(u)| \dif t,
 \end{align*}
and again using Gr\H{o}nwall's inequality, it follows that 
\begin{align*}
 \sup_{t\leq T} |\bar{X}_n(t)| &\leq \Big(\sup_{t\leq T} |\bar{M}_n^{\sss X}(t)| +T\sup_{t\leq T}|g_n(t)\bar{Y}_n(t)|\Big)
\times \exp\Big(T\sup_{t\leq T}|f_n(t)|\Big).
\end{align*}
Then stochastic boundedness of $(\bar{\bld{X}}_n)_{n\geq 1}$ follows from Lemma~\ref{lem:conv-mart}, \eqref{eq:limit-fg},
and the stochastic boundedness criterion for square-integrable martingales given in~\cite[Lemma 5.8]{PTRW07}.

From stochastic boundedness of the processes we can claim that any sequence $(n_k)_{k\geq 1}$ has a further subsequence $(n_k')_{k\geq 1}\subseteq (n_k)_{k\geq 1}$ such that
 \begin{eq}
 (\bar{\bld{X}}_{n'_k},\bar{\bld{Y}}_{n'_k})\dto (\bld{X}',\bld{Y}'),
 \end{eq}
along that subsequence, where the limit  $(\bld{X}',\bld{Y}')$ may depend on the subsequence $(n_k)_{k\geq 1}$.
 However, due to the convergence result in Lemma~\ref{lem:conv-mart} and \eqref{eq:limit-fg}, the continuous mapping theorem (see~\cite[Section~3.4]{W02}) implies that the limit $(\bld{X}',\bld{Y}')$ must satisfy \eqref{eq:diffusion-SDE}.
 Again, the solution to the SDE in~\eqref{eq:diffusion-SDE} is unique, and
 therefore the limit $(\bld{X}',\bld{Y}')$ does not depend on the subsequence $(n_k)_{k\geq 1}$. Thus, the proof is complete.
 \end{proof}

 \begin{proof}[Proof of Theorem~\ref{th:jam-diffusion}]
 First observe that
 $$\sqrt{n}(J_n^\star-J^\star)\dto X(J^\star)\quad\text{as}\quad n\to\infty.$$
 Indeed this can be seen by the application of the hitting time distribution theorem in~\cite[Theorem~4.1]{EK2009}, and noting the fact that $x'(J^\star)=-1$.
Now since $\XX$ is a centered Gaussian process, in order to complete the proof of Theorem~\ref{th:jam-diffusion}, we only need to compute $\mathrm{Var}(X(J^\star))$.
We will use the following known result~\cite[Theorem~8.5.5]{A74} to calculate the variance of $X(t)$.
\begin{lemma}[{Expectation and variance of SDE}]
Consider the $d$-dimensional stochastic differential equation given by
\begin{equation}
 \dif Z(t) = (A(t)Z(t)+a(t))\dif t+ \sum_{i=1}^db_i(t)\dif B_i(t),
\end{equation}
where $Z(0)=z_0\in \R^d$, the $b_i$'s are $\R^d$-valued functions, and the $B_i$'s are independent standard Brownian motions, $i=1,\ldots, d$. Then given $Z(0)=x_0$, $Z(t)$ has a normal distribution with mean vector $m(t)$ and covariance matrix $V(t)$, where $m(t)$ and $V(t)$ satisfy the recursion relations
\begin{eq}
 \frac{\dif}{\dif t} m (t) = A(t)m(t) + a(t),\quad
 \frac{\dif}{\dif t} V(t) = A(t)V (t) + V (t)A^{\sss T} (t) +\sum_{i=1}^d b_ib_i(t)^{\sss T},  
\end{eq}with initial conditions  $m(0) = x_0$, and   $V (0) = 0$.
\end{lemma}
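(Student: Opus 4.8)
The lemma is the classical fact that a linear It\^o SDE with deterministic coefficients has Gaussian marginals whose mean vector and covariance matrix are governed by linear matrix ODEs, and the plan is to prove it by variation of constants. Let $\Phi(\cdot)$ be the fundamental matrix of the homogeneous system, i.e.\ the unique absolutely continuous solution of $\dot\Phi(t)=A(t)\Phi(t)$ with $\Phi(0)=I$; since $A(\cdot)$ is locally bounded, $\Phi(t)$ exists on the whole interval, is invertible for every $t$, and is of finite variation (hence has zero quadratic variation). Put
\[M(t):=z_0+\int_0^t\Phi(s)^{-1}a(s)\,\dif s+\int_0^t\Phi(s)^{-1}\sum_{i=1}^d b_i(s)\,\dif B_i(s),\]
so that the candidate solution is $Z(t)=\Phi(t)M(t)$. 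I would then carry out three steps: (i) verify that $\Phi(\cdot)M(\cdot)$ solves the SDE, so that by uniqueness for linear (Lipschitz) SDEs it equals $Z$; (ii) deduce normality from the structure of this solution; and (iii) differentiate the resulting formulas for the mean and the second moment to obtain the stated recursions.

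For (i), apply the It\^o product rule to $\Phi(t)M(t)$: because $\Phi$ is deterministic and of finite variation, the cross-variation term $\dif[\Phi,M]_t$ vanishes, so $\dif\big(\Phi(t)M(t)\big)=\dot\Phi(t)M(t)\,\dif t+\Phi(t)\,\dif M(t)=A(t)\Phi(t)M(t)\,\dif t+a(t)\,\dif t+\sum_i b_i(t)\,\dif B_i(t)$, which is exactly the right-hand side of the SDE with $\Phi(t)M(t)$ in place of $Z(t)$, and $\Phi(0)M(0)=z_0$. For (ii), each coordinate of $\int_0^t\Phi(s)^{-1}b_i(s)\,\dif B_i(s)$ is a Wiener integral of a deterministic integrand, hence Gaussian; by independence of $B_1,\dots,B_d$ and linearity these are jointly Gaussian across $i$ (and across times $t$), and applying the deterministic affine map $v\mapsto\Phi(t)\big(z_0+\int_0^t\Phi(s)^{-1}a(s)\,\dif s+v\big)$ preserves Gaussianity, so $Z(t)$ is a Gaussian vector and indeed $(Z(t))_{t\ge 0}$ is a Gaussian process.

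For (iii), taking expectations kills the mean-zero stochastic integral and gives $m(t)=\Phi(t)\big(z_0+\int_0^t\Phi(s)^{-1}a(s)\,\dif s\big)$; differentiating and using $\dot\Phi=A\Phi$ yields $\dot m(t)=A(t)m(t)+a(t)$ with $m(0)=z_0$. Since $Z(t)-m(t)=\Phi(t)\int_0^t\Phi(s)^{-1}\sum_i b_i(s)\,\dif B_i(s)$, the It\^o isometry (the $B_i$ being independent) gives $V(t)=\Phi(t)\Big(\int_0^t\Phi(s)^{-1}\big(\sum_i b_i(s)b_i(s)^{\sss T}\big)(\Phi(s)^{-1})^{\sss T}\,\dif s\Big)\Phi(t)^{\sss T}$; differentiating this product of three factors, using $\dot\Phi=A\Phi$ (whence $(\dot\Phi)^{\sss T}=\Phi^{\sss T}A^{\sss T}$) together with the fundamental theorem of calculus for the middle integral, produces $\dot V(t)=A(t)V(t)+V(t)A(t)^{\sss T}+\sum_i b_i(t)b_i(t)^{\sss T}$ with $V(0)=0$. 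An equivalent, $\Phi$-free route is to apply It\^o's formula directly to $Z_i$ and to $Z_iZ_j$ and take expectations, using $\dif\langle Z_i,Z_j\rangle_t=\big(\sum_k b_k(t)b_k(t)^{\sss T}\big)_{ij}\,\dif t$; this yields the same ODEs but still relies on step (ii) for the normality claim.

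I do not expect a genuine obstacle here: this is a textbook result, which is why the paper simply cites \cite{A74}. The only points deserving a word of care are that $\Phi$ contributes no It\^o correction in the product rule (immediate, since $\Phi$ is deterministic and absolutely continuous) and the mild integrability/measurability hypotheses on $a(\cdot)$ and the $b_i(\cdot)$ needed for the Wiener integrals and the isometry to be defined; in the application to Proposition~\ref{prop-diffusion} the coefficients are continuous and bounded on $[0,T]$ for every $T<1/\mu$, so these hold automatically. One then specializes with $Z=(X,Y)^{\sss T}$, $a\equiv 0$, $A(t)=\begin{bmatrix} f(t)&g(t)\\ 0&0\end{bmatrix}$, and $b_1,b_2$ read off from \eqref{eq:diffusion-SDE}; solving $\dot V=AV+VA^{\sss T}+\sum_i b_ib_i^{\sss T}$ componentwise, and noting that $\sigma_{yy}(t)=\sigma^2 t$ so that $g(t)\sigma_{yy}(t)=t\,g(t)\sigma^2$, recovers the system \eqref{defn:functions0}, and evaluating at $t=J^\star$ gives $V^\star(c,\alpha)=\sigma_{xx}(J^\star)$.
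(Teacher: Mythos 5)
Your proof is correct and complete. The paper itself gives no proof of this lemma---it is quoted as a known result with a citation to \cite{A74} (Theorem 8.5.5)---and your variation-of-constants argument is precisely the standard proof that the citation stands in for: the fundamental matrix $\Phi$ with $\dot\Phi=A\Phi$, $\Phi(0)=I$, the representation $Z(t)=\Phi(t)\bigl(z_0+\int_0^t\Phi(s)^{-1}a(s)\,\dif s+\int_0^t\Phi(s)^{-1}\sum_i b_i(s)\,\dif B_i(s)\bigr)$, joint Gaussianity of Wiener integrals of deterministic integrands, and the It\^o isometry, followed by differentiation of the explicit mean and covariance formulas (your alternative route via It\^o applied to $Z_iZ_j$ is equally valid). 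The two points you flag---that the deterministic, finite-variation factor $\Phi$ contributes no It\^o correction in the product rule, and the integrability of the coefficients, which holds in the application since $f$, $g$, $\beta$, $\rho$ are continuous and bounded on $[0,T]$ for every $T<1/\mu$---are exactly the right caveats, and your concluding specialization recovering the system \eqref{defn:functions0} with $\sigma_{yy}(t)=\sigma^2 t$ and $V^\star(c,\alpha)=\sigma_{xx}(J^\star)$ matches how the lemma is deployed in the paper's proof of Theorem~\ref{th:jam-diffusion}.
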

In our case, observe from~\eqref{eq:diffusion-SDE} that
\begin{eq}
A(t)=\begin{bmatrix}
 f(t) & g(t)\\
 0 & 0
 \end{bmatrix}, \
 a(t)=\begin{bmatrix}
  0\\0
 \end{bmatrix},\
 b_1(t)=
 \begin{bmatrix}
  \rho(t)\sqrt{\beta(t)}\\
  \sigma
 \end{bmatrix},\ 
 b_2(t)=
 \begin{bmatrix}
  \sqrt{1-\rho(t)^2}\sqrt{\beta(t)}\\
  0
 \end{bmatrix}
\end{eq}
Denote the variance-covariance matrix of $(X(t),Y(t))$ by
\begin{equation}
 V(t) = \begin{bmatrix}
  \sigma_{xx}(t) & \sigma_{xy}(t)\\
  \sigma_{xy}(t)& \sigma_{yy}(t)
 \end{bmatrix}.
\end{equation}
Then
\begin{align*}
 \frac{\dif }{\dif t} V(t)
 &=
 \begin{bmatrix}
  \sigma_{xx}(t)f(t)+\sigma_{xy}(t)g(t)& \sigma_{xy}(t)f(t)+\sigma_{yy}(t)g(t)\\
  0 & 0
 \end{bmatrix}
 +
 \begin{bmatrix}
  \sigma_{xx}(t)f(t)+\sigma_{xy}(t)g(t)& 0\\
  \sigma_{xy}(t)f(t)+\sigma_{yy}(t)g(t) & 0
 \end{bmatrix}\\
 &\hspace{1cm}+
 \begin{bmatrix}
  \rho(t)^2\beta(t)&\sqrt{\beta(t)}\sigma\rho(t)\\
  \sqrt{\beta(t)}\sigma\rho(t)&\sigma^2
 \end{bmatrix}
 +\begin{bmatrix}
  (1-\rho(t)^2)\beta(t)&0\\
  0&0
 \end{bmatrix}\\
 &=\begin{bmatrix}
  2\sigma_{xx}(t)f(t)+2\sigma_{xy}(t)g(t)&\sigma_{xy}(t)f(t)+\sigma_{yy}(t)g(t)\\
  \sigma_{xy}(t)f(t)+\sigma_{yy}(t)g(t)& 0
 \end{bmatrix}
+
 \begin{bmatrix}
  \beta(t)&\sqrt{\beta(t)}\sigma\rho(t)\\
  \sqrt{\beta(t)}\sigma\rho(t)&\sigma^2
 \end{bmatrix}
\end{align*}
Therefore, the variance of $X(t)$ can be obtained from the solution of the recursion equations
\begin{eq}
  \frac{\dif \sigma_{xx}(t)}{\dif t}&=2(\sigma_{xx}(t)f(t)+\sigma_{xy}(t)g(t))+\beta(t),\\ 
  \frac{\dif \sigma_{xy}(t)}{\dif t}&=\sigma_{xy}(t)f(t)+\sigma_{yy}(t)g(t)+\sqrt{\beta(t)}\sigma\rho(t),
\end{eq}
and the proof is thus completed by noting that $\sigma_{yy}(t)=\sigma^2t$.
 \end{proof}

\section{Clustering coefficient of random geometric graphs}
\label{sec:clustering-coeff}
The clustering coefficient for the random geometric graph was derived in \cite{DC02} along with an asymptotic formula, when the dimension becomes large. 
Below we give an alternative derivation. 
The formula~\eqref{eq:alpha-choice} is more tractable in all dimensions compared with the formula in~\cite{DC02}.
Consider $n$ uniformly chosen points on a $d$-dimensional box $[0,1]^d$ and connect two points $u,v$ by an edge if they are at most $2r$ distance apart. 
Fix any three vertex indices $u,v$, and $w$. 
We write $u\leftrightarrow v$ to denote that $u$ and $v$ share an edge.  
The clustering coefficient for $\RGG(c,d)$ on $n$ vertices is then defined by
\begin{equation}
 C_n(c,d) := \prob{v \leftrightarrow w|u\leftrightarrow v, u\leftrightarrow w}.
\end{equation}
 The following proposition explicitly characterizes the asymptotic value of $C_n(c,d)$ for any density $c$ and dimension $d$.
\begin{proposition}\label{th:cc}
For any fixed $c> 0$, and $d\geq 1$, as $n\to\infty,$
\begin{equation}\label{eq:cc}
C_n(c,d)\to C(d)=d\int_{0}^{1}x^{d-1}I_{1-\frac{x^2}{4}}\Big(\frac{d+1}{2}, \frac{1}{2}\Big) \dif x.
\end{equation}
\end{proposition}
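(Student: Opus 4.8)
The plan is to reduce the computation of $C_n(c,d)$ to a purely local, scale-invariant geometric quantity and then to evaluate that quantity explicitly in terms of the incomplete beta integral.

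First I would use exchangeability of the $n$ i.i.d.\ uniform points together with the translation invariance of the periodic box: conditioning on the location of $u$ makes $v$ and $w$ i.i.d.\ uniform on $[0,1]^d$. Writing $C_n(c,d)=\mathbb{P}(v\leftrightarrow w,\,u\leftrightarrow v,\,u\leftrightarrow w)/\mathbb{P}(u\leftrightarrow v,\,u\leftrightarrow w)$ and using that $\{u\leftrightarrow v\}$ and $\{u\leftrightarrow w\}$ are independent given $u$, the conditioning forces $v,w\in B(u,2r)$, and conditionally on that they are i.i.d.\ uniform on $B(u,2r)$. Since $r=r(n)\to 0$, for all large $n$ this ball is (in the torus metric) isometric to a Euclidean ball of radius $2r$, so no boundary correction arises; applying the scaling $z\mapsto z/(2r)$ shows that the conditional probability of $v\leftrightarrow w$ equals $q_d:=\mathbb{P}(\|V-W\|\le 1)$, where $V,W$ are i.i.d.\ uniform in the unit ball $\mathbb{B}^d\subset\mathbb{R}^d$. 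In fact this identity is already exact once $r<1/4$, which makes the passage to the limit immediate; if one wishes to drop the periodic-boundary assumption, the fraction of points whose $2r$-ball meets $\partial[0,1]^d$ is $O(r)=o(1)$, so the same limit follows.

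Next I would compute $q_d$. Conditioning on $V=x$ with $\|x\|=\rho\in[0,1]$, the quantity $q_d(\rho):=\mathbb{P}(\|V-W\|\le 1\mid V=x)$ is the normalized volume $\mathrm{Vol}\bigl(\mathbb{B}^d\cap(x+\mathbb{B}^d)\bigr)/\mathrm{Vol}(\mathbb{B}^d)$ of the lens-shaped intersection of two unit balls whose centers lie at distance $\rho$. By the reflection symmetry across the perpendicular bisector of the segment joining the two centers, this lens is a disjoint union of two congruent spherical caps, each the portion of a unit ball cut off by a hyperplane at distance $\rho/2$ from its center. I would then invoke (and, if desired, establish by slicing in one coordinate, substituting $u=1-t^2$, and recognizing the regularized incomplete beta integral) the cap-volume identity
\[
\mathrm{Vol}\bigl(\{z\in\mathbb{B}^d:\ z_1\ge a\}\bigr)=\tfrac12\,\mathrm{Vol}(\mathbb{B}^d)\,I_{1-a^2}\!\Bigl(\tfrac{d+1}{2},\tfrac12\Bigr),\qquad 0\le a\le 1 .
\]
Applying this with $a=\rho/2$ to each of the two caps gives $q_d(\rho)=I_{1-\rho^2/4}\!\bigl(\tfrac{d+1}{2},\tfrac12\bigr)$. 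Finally, since $V$ is uniform in $\mathbb{B}^d$ we have $\mathbb{P}(\|V\|\le\rho)=\rho^d$, so $\|V\|$ has density $d\rho^{d-1}$ on $[0,1]$, and integrating yields
\[
q_d=\int_0^1 d\,\rho^{d-1}\,I_{1-\rho^2/4}\!\Bigl(\tfrac{d+1}{2},\tfrac12\Bigr)\,d\rho=d\int_0^1 x^{d-1}\,I_{1-x^2/4}\!\Bigl(\tfrac{d+1}{2},\tfrac12\Bigr)\,dx=C(d).
\]

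The calculation is essentially routine. The only steps that require genuine care are the reduction to the conditional law in which $(v,w)$ is i.i.d.\ uniform inside a ball — together with the verification that boundary effects are negligible (trivial under periodic boundary, $o(1)$ otherwise) — and the bookkeeping in the cap-volume identity, in particular the observation that the lens is a disjoint union of two congruent caps cut at distance $\rho/2$ rather than $\rho$.
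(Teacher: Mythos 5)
Your proof is correct and follows essentially the same route as the paper: after conditioning, $v$ and $w$ are i.i.d.\ uniform on the ball of radius $2r$ around $u$, the conditional probability of $v\leftrightarrow w$ is the expected normalized lens volume $I_{1-\rho^2/4}\bigl(\tfrac{d+1}{2},\tfrac{1}{2}\bigr)$ at scaled center distance $\rho$, and integrating against the radial density $d\rho^{d-1}$ gives $C(d)$. The only differences are cosmetic: you derive the lens-volume identity from the spherical-cap formula (the paper cites it as a known lemma) and you treat the boundary/torus issue explicitly, which the paper handles by taking $r$ sufficiently small.
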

\begin{proof}
Observe that the $\RGG$ model can be constructed by throwing points sequentially at uniformly chosen locations independently, and then connecting to the previous vertices that are at most $2r$ distance away. 
Since the locations of the vertices are chosen independently, without loss of generality we assume that in the construction of the $\RGG$ model, the locations of $u,v,w$ are chosen in this respective order. 
Now, the event $\{u\leftrightarrow v,u\leftrightarrow w,v \leftrightarrow w\}$ occurs if and only if  $v$ falls within the $2r$ neighborhood of $u$ and $w$ falls within the intersection region of two spheres of radius $2r$, centered at $u$ and $v$, respectively. 
Let $B_d(\bld{x},2r)$ denote the $d$-dimensional sphere with radius $2r$, centered at $\bld{x}$, and let $V_d(2r)$ denote its volume. 
Since $r$ is sufficiently small, so that $B_d(\bld{x},2r)\subseteq [0,1]^d$,  using translation invariance, we assume that the location of $u$ is $\bld{0}$. 
Let $\bld{v},\bld{w}$ denote the positions in the $d$-dimensional space, of vertices $v$ and $w$, respectively.  
Notice that, conditional on the event $\{\bld{v}\in B_d(\bld{0},2r)\}$, the position $\bld{v}$ is uniformly distributed over  $B_d(\bld{0},2r)$.
Let $\bld{V}$ be a point chosen uniformly from $B_d(\bld{0},2r)$. 
Then the above discussion yields 
\begin{align}\label{eq:prob-area}
 &C_n(c,d) =\frac{\prob{u\leftrightarrow v,u\leftrightarrow w,v \leftrightarrow w}}{\prob{u\leftrightarrow v, u\leftrightarrow w}}\nonumber\\
 &= \frac{1}{(V_d(2r))^2}\int_{\bld{v}\in B_d(\bld{0},2r)}\prob{\bld{w}\in B_d(\bld{0},2r) \cap B_d(\bld{v},2r)}\dif \bld{v}\nonumber\\
 & = \frac{1}{V_d(2r)} \E[|B_d(\bld{0},2r)\cap B_d(\bld{V},2r)|].
\end{align}
We shall use the following lemma to compute the expectation term in \eqref{eq:prob-area}.
\begin{lemma}[{\cite{sphere}}] For any $\bld{x}$ with $\|\bld{x}\| = \rho$, the intersection volume $|B_d(\bld{0},2r)\cap B_d(\bld{x},2r)|$ depends only on $\rho$ and $r$, and is given by 
\begin{equation}
 |B_d(\bld{0},2r)\cap B_d(\bld{x},2r)| = V_d(2r)\cdot I_{1-\frac{\rho^2}{16r^2}}\Big(\frac{d+1}{2}, \frac{1}{2}\Big),
\end{equation} where $I_z(a,b)$  denotes the normalized incomplete beta integral given by $$ I_z(a,b) = \frac{\int_0^z y^{a-1}(1-y)^{b-1}\dif y}{\int_0^1y^{a-1}(1-y)^{b-1} \dif y}.$$
\end{lemma}

Observe that the Jacobian corresponding to the transformation from the Cartesian coordinates $(x_1,\ldots,x_d)$ to the Polar coordinates $(\rho,\theta,\phi_1,\ldots,\phi_{d-2})$, is given by 
\begin{equation}
 J_d (\rho, \theta,\phi_1,\dots,\phi_{d-2}) = \rho^{d-1} \prod_{j=1}^{d-2} (\sin(\phi_j))^{d-1-j}.  
\end{equation}
Thus, \eqref{eq:prob-area} reduces to 
\begin{align*}
&C_n(c,d) = \frac{1}{(V_d(2r))^2} \int_{\bld{x}\in B_d(\bld{0},2r)}|B_d(\bld{0},2r)\cap B_d(\bld{x},2r)| \dif \bld{x}= \frac{1}{V_d(2r)} \int_{\|\bld{x}\|\leq 2r}I_{1-\frac{\|\bld{x}\|^2}{16r^2}}\Big(\frac{d+1}{2}, \frac{1}{2}\Big) \dif \bld{x}\\
  &= \frac{1}{V_d(2r)}\int_{0}^{2r}\int_{0}^{2\pi} \int_{0}^\pi\dots \int_{0}^{\pi} \rho^{d-1}I_{1-\frac{\rho^2}{16r^2}}\Big(\frac{d+1}{2}, \frac{1}{2}\Big)\prod_{j=1}^{d-2} (\sin(\phi_j))^{d-1-j}\prod_{j=1}^{d-2}\dif \phi_j \dif \theta \dif \rho,
\end{align*}
and we obtain,
\begin{align*}
C_n(c,d) = \Big(\int_{0}^{2r}\rho^{d-1}\dif \rho\Big)^{-1}
\int_{0}^{2r}\rho^{d-1}I_{1-\frac{\rho^2}{16r^2}}\Big(\frac{d+1}{2}, \frac{1}{2}\Big)\dif \rho,
\end{align*}
since
$$V_d(2r) = 2\pi \bigg(\prod_{j=1}^{d-2}\int_0^\pi (\sin(\phi_j))^{d-1-j} \dif \phi_j\bigg) \int_0^{2r}\rho^{d-1}\dif \rho.$$
Therefore, putting $x = \rho/2r$, yields
\begin{align}
 C_n(c,d)  &= \Big(\int_0^1 x^{d-1}\dif x\Big)^{-1}\int_{0}^{1}x^{d-1}I_{1-\frac{x^2}{4}}\big(\frac{d+1}{2}, \frac{1}{2}\big)\dif x = d\int_{0}^{1}x^{d-1}I_{1-\frac{x^2}{4}}\big(\frac{d+1}{2}, \frac{1}{2}\big)\dif x,
\end{align}which proves the result.
\end{proof}

\section{Discussion}\label{sec:discussion}
We introduced a clustered random graph model with tunable local clustering and a sparse superimposed structure. 
The level of clustering
was set to suitably match the local clustering in the topology generated by the random geometric graph. 
This resulted in a unique parameter $\alpha_d$ that for each dimension $d$ creates a one-to-one mapping between the tractable random network model and the intractable random geometric graph. 
In this way, we offer a new perspective for understanding $\RSA$ on the continuum space in terms of \RSA on random networks with local clustering. 
Analysis of the random network model resulted in precise characterizations of the limiting jamming fraction and its fluctuation. 
The precise results then served, using the one-to-one mapping, as predictions for the fraction of covered volume for \RSA in the Euclidean space. 
Based on extensive simulations we then showed that these prediction were remarkably accurate, irrespective of density or dimension.



In our analysis the random network model serves as a topology generator that replaces the topology generated by the random geometric graph. 
While the latter is directly connected with the metric in the Euclidian space, the only spatial information in the topologies generated by the random network model is contained in the matched average degree and clustering. 
%
%
One could be inclined to think that random topology generators such as the $\CRG(c,\alpha_d)$ model may not be good enough. Indeed, this random network model reduces all possible interactions among pairs of vertices to only two principal components:  the local interactions due to the clustering, and a mean-field distant interaction. There is, however, building evidence that such randomized topologies can approximate rigid spatial topologies when the local interactions in both topologies are matched. Apart from this paper, the strongest evidence to date for this line of reasoning is \cite{K2016}, where it was shown that the typical ensembles from the latent-space geometric graph model can be modeled by an inhomogeneous random graph model that matches with the original graph in terms of the average degree and a measure of clustering. 
We should mention that \cite{K2016} is restricted to one-dimensional models and does not deal with $\RSA$, but it shares with this paper the perspective that matching degrees and local clustering can be sufficient for describing spatial settings.

\bibliographystyle{abbrv}
\bibliography{references2}

\end{document}